\numberwithin{equation}{section}
\DeclareMathOperator{\gw}{\mathsf{g}_{\pm}}
\DeclareMathOperator{\g}{\mathsf{g}}
\DeclareMathOperator{\cri}{\mathsf{cr}}
\DeclareMathOperator{\D}{\mathsf{D}}
\DeclareMathOperator{\s}{\mathsf{s}}
\DeclareMathOperator{\Dw}{\mathsf{D}_{\pm}}
\DeclareMathOperator{\sw}{\mathsf{s}_{\pm}}
\DeclareMathOperator{\ord}{ord}
\DeclareMathOperator{\supp}{supp}
\newcommand{\Fc}{\mathcal F}
\newcommand{\N}{\mathbb{N}}
\newcommand{\Z}{\mathbb{Z}}
\newtheorem{theorem}{Theorem}[section]
\newtheorem{proposition}[theorem]{Proposition}
\newtheorem{lemma}[theorem]{Lemma}
\newtheorem{corollary}[theorem]{Corollary}
\newtheorem{remark}[theorem]{Remark}
\theoremstyle{definition}
\newtheorem{definition}[theorem]{Definition}
\newtheorem{example}[theorem]{Example}
\begin{document}

\title{Inverse results for weighted Harborth constants}

\author{Luz E. Marchan \and  Oscar Ordaz \and Dennys Ramos \and Wolfgang A. Schmid}

\address{(L.E.M \& D.R.) Departamento de Matem\'aticas, Decanato de Ciencias y Tecnolog\'{i}as, Universidad Centroccidental Lisandro Alvarado, Barquisimeto, Venezuela}
\address{(O.O.) Escuela de Matem\'aticas y Laboratorio MoST, Centro ISYS, Facultad de Ciencias,
Universidad Central de Venezuela, Ap. 47567, Caracas 1041--A,
Venezuela}
\address{(W.A.S.) Universit\'e Paris 13, Sorbonne Paris Cit\'e, LAGA, CNRS, UMR 7539, Universit\'e Paris 8, F-93430, Villetaneuse, France}

\email{luzelimarchan@gmail.com} \email{oscarordaz55@gmail.com}
\email{ramosdennys@ucla.edu.ve} \email{schmid@math.univ-paris13.fr}

\thanks{The research of O. Ordaz is supported by the Postgrado de la Facultad de Ciencias de la U.C.V., Faculty of Science project, and the Banco Central de Venezuela; the one of W.A. Schmid by the ANR project Caesar, project number ANR-12-BS01-0011.}

\subjclass[2010]{11B30, 11B75, 20K01}

\keywords{finite abelian group, weighted subsum, zero-sum problem}

\begin{abstract}
For a finite abelian group  $(G,+)$ the Harborth constant is defined as the smallest integer $\ell$ such that each squarefree sequence over $G$ of length $\ell$ has a subsequence of length equal to the exponent of $G$ whose terms sum to $0$. The plus-minus weighted Harborth constant is defined in the same way except that the existence of a plus-minus weighted subsum equaling $0$ is required, that is, when forming the sum one can chose a sign for each term. 
The inverse problem associated to these constants is the problem of determining the structure of squarefree sequences of maximal length that do not yet have such a zero-subsum. 
We solve the inverse problems associated to these constants for certain groups, in particular for groups that are the direct sum of a cyclic group and a group of order two.  Moreover, we obtain some results for the plus-minus weighted Erd\H{o}s--Ginzburg--Ziv constant.
\end{abstract}

\maketitle

\section{Introduction}
For a finite abelian group $(G,+,0)$ the Harborth constant of the group $G$, denoted $\g(G)$, is the smallest integer $\ell$ such that each squarefree sequence $g_1 \dots g_{\ell}$ over $G$ of length at least $\ell$, equivalently each subset of $G$ of cardinality at least $\ell$, has a zero-sum subsequence of length equal to the exponent of the group, that is there is a subset $I \subseteq \{1, \dots , \ell\}$ with $|I|= \exp(G)$ such that $\sum_{i \in I}g_i = 0$. 

This constant was first considered by Harborth \cite{Harborth} and is one of several well-investigated zero-sum constants of a finite abelian group. We refer to the survey article \cite{gaogersurvey} and the respective chapters of the monographs \cite{geroldingerhalterkochBOOK,grynk_book}, for overviews of the subject. In Section \ref{sec_key} we recall the definition of several other such constants.     

Given a subset $W \subseteq \Z$ of ``weights'' one can consider the analogous problem with weights $W$. That is, one seeks to determine the smallest integer $\ell$, denoted $\g_W(G)$, such that each squarefree sequence $g_1 \dots g_{\ell}$ over $G$ of length $\ell$ has a $W$-weighted zero-subsum of  length equal to the exponent of the group whose terms sum to $0$, that is there is a subset $I \subseteq \{1, \dots , \ell\}$ with $|I|= \exp(G)$ such that $\sum_{i \in I}w_ig_i = 0$ where $w_i \in W$. In fact, there are several ways of considering ``weights'' in zero-sum problems. This one was introduced by Adhikari et al.~\cite{adetal2, adetal, adhi0}; we refer to \cite{ZengYuan2011} (see also \cite{grynk_book}) for a more general notion of weights. 

The interesting special case that $W= \{+1,-1\}$ is called the plus-minus weighted problem; in this case we use the notation $\gw(G)$.  

The inverse problem associated to a zero-sum problem is the problem of determining the structure of sequences of maximum length not yet having the required property, that is in our case the problem of determining all squarefree sequences of length $\g(G)-1$ that do not have a zero-sum subsequence of length $\exp(G)$; and likewise for the weighted problem. 

In an earlier work \cite{MORW} we determined the exact value of $\g(C_2 \oplus C_{2n})$ and of $\gw(C_2 \oplus C_{2n})$ (we denote by $C_n$ a cyclic group of order $n$). In particular, it turned out that $\g(C_2 \oplus C_{2n}) = \gw(C_2 \oplus C_{2n}) = 2n+2$ for even $n \ge 4$; we recall the complete result in Section \ref{sec_main}. This equality is curious, as the respective conditions are quite different. In this paper we solve the inverse problems for $\g(C_2 \oplus C_{2n})$ and  $\gw(C_2 \oplus C_{2n})$ in general (see Section \ref{sec_main}), which in particular leads to a better understanding of this phenomenon. It turns out that at least the structure of the sequences for the less restrictive condition without weights can be richer than in the plus-minus weighted case, and it just happens that the extremal length is still the same. Moreover, in Section \ref{sec_genb} we extend the known characterization of all groups where $\g(G)= |G|+1$ (see \cite[Lemma 10.1]{gaogersurvey}), that is  no squarefree zero-sum sequence of length $\exp(G)$ exists, to a characterization for all sets of weights $W$ and all groups $G$ where $\g_W(G)=|G|+1$. This result allows us to determine $\g_W(G)$ for cyclic groups and it also has direct implications for the inverse problem in some cases. 

Finally, we also determine the closely related plus-minus weighted Erd\H{o}s--Ginz\-burg--Ziv constant for $C_2 \oplus C_{2n}$. We refer to Section \ref{sec_key} for the definition and to Section \ref{sec_EGZ} for a discussion of earlier results and context.

\section{Preliminaries}
\label{sec_prel}

We recall  definitions and some notation.  By $\mathbb{N}$ and $\mathbb{N}_0$ we denote the set of positive and non-negative integers, respectively. For reals $a, b $ we denote by $[a, b] = \{x \in  \mathbb{Z} \colon a  \le x \le b \}$. For a prime number $p$ and a non-zero integer $n$ we write $p^v||n$ if $p^v \mid n$ yet $p^{v+1} \nmid n$, in other words the $p$-adic valuation of $n$ is $v$. 

We use additive notation for finite abelian groups. We denote by $C_n$ a cyclic group of order $n$. For $(G,+,0)$, a finite abelian group, there are uniquely determined $1< n_1 \mid \dots \mid n_r$ such that $G \cong C_{n_1} \oplus \dots \oplus C_{n_r}$, and $n_r$ is called the exponent of $G$, denoted $\exp(G)$; the exponent of a group of order $1$ is $1$.
By a basis of $G$ we mean a family of elements $(e_1, \dots, e_s)$ of $G$ such that each element of $G$ can be written in a unique way as $\sum_{i=1}^s \alpha_i e_i$ with $\alpha_i \in [0, \ord(e_i)-1]$. 

A sequence over $G$ is an element of  $\Fc(G)$ the free abelian monoid over $G$. We use multiplicative notation for this monoid. 
Thus, for a sequence $S\in \Fc(G)$  there exist unique $v_g \in \mathbb{N}_0$ such that $S= \prod_{g \in G}g^{v_g}$. Alternatively, there exist up to ordering uniquely determined $g_1, \dots, g_{\ell} \in G$ such that $S= g_1 \dots g_{\ell}$.

We denote the  empty sequence, which is the neutral element of this monoid,  simply by $1$. Further, we denote by $|S|= \ell$ the length of $S$ and by $\sigma(S) = \sum_{i=1}^{\ell} g_i $ its sum. The \emph{set} $\supp(S)  = \{ g \in G \colon v_g > 0 \} = \{g_1,\dots ,g_{\ell}\}$ is called the support of $S$, and the set $\Sigma(S) = \{ \sigma (T) \colon 1 \neq T \mid S\}$ is called the set of (nonempty) subsums of $S$; we also use the notation  $\Sigma^0(S)$ for $\Sigma(S) \cup \{0\}$.  We use the notation $\Sigma(S)$ also for $S$ a set, with the analogous definition.  
 
A subsequence of $S$ is a sequence $T$  that divides  $S$ in the monoid of sequences,  that is $T= \prod_{i \in I }g_i  $ for some $I\subseteq [1, \ell]$. Moreover,  we denote by $T^{-1}S$ the sequence fulfilling $(T^{-1}S)T =S $, that is $T^{-1}S = \prod_{i \in [1,  \ell] \setminus I  }g_i$.

The sequence $S$  is called squarefree if $v_g \le 1$ for each $g \in G$, that is all the $g_i$ are distinct. While there is a direct correspondence between squarefree sequences and sets, there are technical advantages in working with squarefree sequences in our context. 

For $W \subseteq \mathbb{Z}$, an element of the form  $ \sum_{i=1}^{\ell}  w_i g_i $ with $w_i \in W$ is called a  $W$-weighted sum of $S$ and we denote by $\sigma_W(S)$ the set of all $W$-weighted  sums of $S$. An element is called a $W$-weighted subsum of $S$ if it is a $W$-weighted sum of a non-empty subsequence of $S$. 

In this context $W$ is called a set of weights. It is easy to see that for some fixed $G$, the set $W$ is only relevant up to congruences modulo $\exp(G)$; one could thus assume that $W \subseteq [0, \exp(G)-1]$. Moreover,  the problems we consider are typically trivial or degenerate  for $0 \in W$ and more generally for  $W$ containing a multiple of the exponent; we call such sets of weights trivial and often exclude them from our considerations. 

The case $W=\{1\}$, corresponds  to the problem without weights, which we sometimes refer to as the classical case. It should be noted though that $\sigma_{\{1\}}(S)$ is not $\sigma(S)$ but  $\{\sigma(S)\}$. 
Especially when used as a subscript, we use the symbol $\pm$  to denote the set of weights  $\{+1, -1\}$, and we use the terminology  plus-minus weighted to refer to this set of weights.

For subsets  $A, B\subseteq G  $ we denote $A+B = \{a+b \colon a\in A, \, b \in B\}$ the sum of the sets $A$ and $B$.  For $g \in G$, we write $g + A$ instead of $\{g\} + A$; we also use this notation for sequences, so $g+S  = (g+g_1) \dots (g+ g_{\ell})$ for $S= g_1 \dots g_{\ell}$. 
For $k \in \mathbb{Z}$, we denote by $k \cdot A = \{k a \colon a \in A\}$ the dilation of $A$ by $k$, not the $k$-fold sum of $A$ with itself; sometimes we just write $kA$ for aesthetic reasons. We recall  that for  $A, B\subseteq G$:  
\begin{equation}
\label{eq_fullgroup}
\text{If  $|A| + |B| \ge  |G| + 1 $,  then $A+B = G$.}
\end{equation}

Since we make use of them frequently we collect some basic observations on the types of groups we study. 
Let $G= C_2 \oplus C_{2n}$ and let $(e_1, e_2)$ denote a basis of $G$ such that $\ord e_1 = 2$ and $\ord e_2 = 2n$, that is each element of $G$ has a unique representation in the form 
$\alpha_1 e_1 + \alpha_2 e_2$ with $\alpha_1 \in [0,1]$ and $\alpha_2 \in [0,2n-1]$. 

Then $2G$ is a cyclic group of order $n$, more precisely it is $\langle 2e_2 \rangle$. 
Furthermore, $G/2G$ is isomorphic to $C_2 \oplus C_2$, and the set of co-sets is given by $\{ 2 G, e_1+2 G, e_2+2 G, (e_1+e_2)+2 G\}$. 
If $n$ is odd, then  $G = \langle e_1  \rangle \oplus \langle ne_2 \rangle \oplus \langle  2 e_2 \rangle$, that is the group is isomorphic to $C_2 \oplus C_2 \oplus C_n$ and an alternative description for the co-sets is given by $\{ 2 G, e_1+2 G, ne_2+2 G, (e_1+ne_2)+2 G\}$.

Likewise,  for $G = C_{2n} = \langle e \rangle $, we have $2G = \langle 2e \rangle $ and $G/2G =  \{ 2G , e+ 2G  \}$. 
In particular, for  $A\subseteq C_{2n}$ we have $|2 A|\geq \left\lceil |A|/2\right\rceil$.

For $G = G_1 \oplus G_2$, we sometimes consider the projection $\pi:G \to G_1$, that is the group homomorphism $g= g_1 +  g_2 \mapsto g_1$ where $g_i \in G_i$. For  any map $ \varphi \colon G \to G'$, where $G'$ denotes an abelian group, there is a unique continuation of $ \varphi $ to a monoid homomorphism from $\Fc(G)$ to  $\Fc(G')$, which we also denote by $  \varphi $; explicitly,  $ \varphi (g_1 \dots g_{\ell}) =  \varphi(g_1) \dots  \varphi(g_{\ell})$. It is key to note that, even if $ \varphi $ is not injective, the length of the sequence is always preserved as the multiplicity of elements is taken into account. Here, it makes a difference if one considers squarefree sequences or sets.

\section{Key definitions and technical results}
\label{sec_key}

We recall the definitions of all the zero-sum constants we need in a formal way. The case that is mainly relevant is the case that the set of weights $W$ is a subset of $[1, \exp(G)-1]$. However, as it is sometimes useful we state the definitions in greater generality.   

\begin{definition}
Let $G$ be a finite abelian group. Let $W \subseteq \mathbb{Z}$.
\begin{enumerate}
\item The $W$-weighted Davenport constant $\mathsf{D}_W(G)$ is the smallest $\ell \in \mathbb{N}$  such that each sequence $S \in \Fc(G)$ with $|S| \ge \ell$  has a non-empty subsequence $T$ such that $0 \in \sigma_W (T)$. 
\item The $W$-weighted Erd\H{o}s--Ginzburg--Ziv constant $\mathsf{s}_W(G)$  is the smallest $\ell \in \mathbb{N}$    such that each sequence $S \in \Fc(G)$ with $|S| \ge \ell$  has a subsequence  $T$ of length $\exp(G)$ such that $0 \in \sigma_W (T)$.  Moreover, $\eta_W(G)$  is the smallest $\ell \in \mathbb{N}$ such that each sequence $S \in \Fc(G)$ with $|S| \ge \ell$  has a non-empty subsequence  $T$ of length  \emph{at most} $\exp(G)$ such that $0 \in \sigma_W (T)$.
\item The $W$-weighted Harborth constant $\g_W(G)$ is the smallest $\ell \in \mathbb{N}$ such that  each \emph{squarefree}  $S \in \Fc(G)$ with $|S| \ge \ell$  has a subsequence  $T$ of length $\exp(G)$ such that $0 \in \sigma_W (T)$, that is $S$ has a $W$-weighted subsum of length $\exp(G)$.
\end{enumerate}
\end{definition}
We remark that all these definitions make sense. On the one hand, a sufficiently long sequence (for example, long enough so that some element appears $\exp(G)$ times) will admit the required subsequences. On the other hand, there are no squarefree sequences of length greater than $|G|$, then  making the assertion vacuously true.   
The squarefree analogue of the  ($W$-weighted) Davenport constant is called the  ($W$-weighted) Olson constant, yet we do not consider it here.  
In case the set of weights $W$ is trivial it is easy to see that $\mathsf{D}_W(G) = \eta_W (G) = 1$ and $\mathsf{s}_W(G)= \g_W(G)= \exp(G)$, and every (squarefree) sequence of length less by one than the respective constant does not have the required property. 

In their study of  the plus-minus weighted Erd\H{o}s--Ginzburg--Ziv and Davenport constants Adhikari, Grynkiewicz, and Sun \cite{adGS} 
established the following useful result.
 
\begin{theorem}[\cite{adGS}, Theorem 4.1.3]
Let $G$ be a finite abelian group and let $S \in\Fc(G)$ be a sequence.
\begin{enumerate}
\item If $|S| > \log_2|G|$, then $S$ has a non-empty plus-minus weighted zero-subsum.
\item If $|S| > \log_2|G|+ 1$, then $S$ has a non-empty plus-minus weighted zero-subsum whose length is even.
\end{enumerate}
\end{theorem}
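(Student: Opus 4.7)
My plan is to prove both parts by a single pigeonhole argument on subset sums, distinguishing even-sized subsets in part (2). Write $S = g_1 \dots g_\ell$ with $\ell = |S|$. For any subset $I \subseteq [1,\ell]$ consider the ordinary sum $\sigma_I = \sum_{i \in I} g_i \in G$; the key observation is that if two distinct subsets $I \neq J$ satisfy $\sigma_I = \sigma_J$, then
\[
\sum_{i \in I \setminus J} g_i \; - \; \sum_{i \in J \setminus I} g_i \; = \; 0,
\]
which is precisely a plus-minus weighted zero-sum on the nonempty subsequence indexed by the symmetric difference $I \triangle J$.

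For part (1), the hypothesis $\ell > \log_2 |G|$ gives $2^{\ell} > |G|$, so by the pigeonhole principle the $2^{\ell}$ subset sums (indexed by all $I \subseteq [1,\ell]$) cannot be pairwise distinct. Picking two colliding subsets $I \neq J$ and forming the symmetric difference yields the desired nonempty plus-minus weighted zero-subsum.

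For part (2), I would restrict the pigeonhole to subsets of even cardinality. There are exactly $2^{\ell - 1}$ such subsets (for $\ell \ge 1$), and the hypothesis $\ell > \log_2 |G| + 1$ rewrites as $2^{\ell - 1} > |G|$. Hence there exist distinct even-sized $I \neq J$ with $\sigma_I = \sigma_J$. The identity $|I \triangle J| = |I| + |J| - 2|I \cap J|$ shows that $|I \triangle J|$ is even, so the subsequence indexed by $I \triangle J$ has even length and, by the same cancellation as above, admits a plus-minus weighted zero-sum.

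The argument is essentially a clean pigeonhole, so there is no serious obstacle; the only delicate point is the parity check in part (2), where one must be careful to count only even subsets (getting the factor $2^{\ell - 1}$) and then verify that the symmetric difference preserves the even-parity property so that the output subsequence really has even length.
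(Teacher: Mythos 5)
Your pigeonhole argument is correct and complete: the collision of two subset sums $\sigma_I=\sigma_J$ does yield a plus-minus weighted zero-sum on the nonempty subsequence indexed by $I\triangle J$, and restricting to the $2^{\ell-1}$ even-cardinality subsets correctly forces $|I\triangle J|$ to be even in part (2). Note that the paper itself gives no proof of this statement --- it is quoted from Adhikari, Grynkiewicz, and Sun \cite{adGS} --- and your argument is the standard elementary proof of that result, so there is nothing to compare beyond observing that it is sound.
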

The following direct consequence is the form in which we apply the result; the condition on the length of $S$ is merely a restatement and the condition on the length of the subsum can be obtained just by applying the result to a subsequence of that length.    
\begin{corollary}
\label{shortsum} 
Let $G$ be a finite  abelian group and let $S \in\Fc(G)$ be a sequence.
\begin{enumerate}
\item If $|S| \ge \lfloor \log_2|G| \rfloor + 1$, then $S$ has a non-empty plus-minus weighted zero-subsum of length at most $\lfloor \log_2|G| \rfloor + 1$.
\item If $|S| \ge  \lfloor \log_2|G|  \rfloor + 2$, then $S$ has a non-empty plus-minus weighted zero-subsum whose length is even and at most $\lfloor \log_2|G|  \rfloor + 2$.
\end{enumerate}
\end{corollary}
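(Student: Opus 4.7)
The plan is to deduce the corollary directly from the cited theorem by applying it to a subsequence of prescribed length, rather than to $S$ itself. Both parts follow by the same mechanism, so I describe only (1) in detail.

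First I would verify that the hypothesis on $|S|$ in the corollary is indeed just a restatement of the hypothesis in the theorem. For any positive integer $m$ one has $\lfloor \log_2 m \rfloor + 1 > \log_2 m$: if $\log_2 m$ is an integer, then the left-hand side equals $\log_2 m + 1$; otherwise, $\lfloor \log_2 m \rfloor > \log_2 m - 1$. Applying this with $m = |G|$, the assumption $|S| \ge \lfloor \log_2 |G| \rfloor + 1$ implies $|S| > \log_2 |G|$, matching the hypothesis of part (1) of the theorem. An entirely analogous observation handles part (2).

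For the length bound on the subsum, I would extract from $S$ an arbitrary subsequence $T$ with $|T| = \lfloor \log_2 |G| \rfloor + 1$; this is possible since $|S| \ge |T|$. Then $T$ itself satisfies $|T| > \log_2 |G|$, so the theorem, applied to $T$, produces a non-empty plus-minus weighted zero-subsum of $T$. Since any subsum of $T$ is a subsum of $S$, and its length is at most $|T| = \lfloor \log_2 |G| \rfloor + 1$, part (1) of the corollary is proved. For part (2) one chooses $T$ of length $\lfloor \log_2 |G| \rfloor + 2$ and applies part (2) of the theorem to obtain a zero-subsum whose length is even and at most $|T|$.

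There is no real obstacle here: the only subtlety is the elementary verification that $\lfloor \log_2 |G| \rfloor + 1 > \log_2 |G|$ in all cases (including when $|G|$ is a power of $2$), which ensures that the hypotheses of the theorem are met by any subsequence of length $\lfloor \log_2 |G| \rfloor + 1$.
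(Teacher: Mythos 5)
Your proposal is correct and is exactly the argument the paper intends: it explicitly states that the length hypothesis is a restatement of the theorem's and that the length bound on the subsum is obtained by applying the theorem to a subsequence of that length. You have simply written out the two elementary verifications (that $\lfloor \log_2|G|\rfloor+1>\log_2|G|$ and that a subsum of a subsequence $T\mid S$ is a subsum of $S$ of length at most $|T|$) that the paper leaves implicit.
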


We recall a lemma established in our earlier investigations on the plus-minus weighted Harborth constant. 

\begin{lemma}[\cite{MORW}, Lemma 3.4]
\label{lem_weightsumset} Let $G$ be a finite abelian group and let
 $S \in \Fc(G)$. Then $\sigma_{\pm}(S) = - \sigma ( S) +
2 \cdot \Sigma^0(S)$. In particular, if $|G|$ is odd, then
$|\sigma_{\pm}(S)| = |\Sigma^0(S)| \ge 1 + |\supp(S) \setminus \{0\}|$.
\end{lemma}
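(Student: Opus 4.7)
The statement naturally splits into two parts: the set identity $\sigma_\pm(S) = -\sigma(S) + 2\cdot \Sigma^0(S)$, and the counting consequence when $|G|$ is odd. Both are quite concrete, so the plan is essentially a direct unfolding of definitions rather than anything clever.

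The plan for the set identity is to reparametrize the sign choices. Write $S = g_1 \dots g_\ell$, so a plus-minus weighted sum is $\sum_{i=1}^\ell w_i g_i$ with $w_i \in \{+1,-1\}$. The bijective substitution $w_i = -1 + 2\delta_i$ with $\delta_i \in \{0,1\}$ lets me rewrite
\begin{equation*}
\sum_{i=1}^\ell w_i g_i \;=\; -\sum_{i=1}^\ell g_i + 2\sum_{i=1}^\ell \delta_i g_i \;=\; -\sigma(S) + 2\sum_{i \in I} g_i,
\end{equation*}
where $I = \{i : \delta_i = 1\} \subseteq [1,\ell]$. As the weight sequence $(w_1,\dots,w_\ell)$ varies over all of $\{+1,-1\}^\ell$, the set $I$ varies over all subsets of $[1,\ell]$, including the empty set, so the partial sum $\sum_{i\in I} g_i$ varies exactly over $\Sigma^0(S)$ (the convention that the empty sum is $0$ is what makes the $0$ appear). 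This gives the first claimed equality.

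For the second part, suppose $|G|$ is odd. Then $\gcd(2, |G|) = 1$, so multiplication by $2$ is an automorphism of $G$; translation by $-\sigma(S)$ is also a bijection. Hence $|\sigma_\pm(S)| = |2 \cdot \Sigma^0(S)| = |\Sigma^0(S)|$. Finally, the inequality $|\Sigma^0(S)| \ge 1 + |\supp(S)\setminus\{0\}|$ follows by exhibiting distinct elements: $0 \in \Sigma^0(S)$ by definition, and each nonzero $g \in \supp(S)$ is itself a subsum of $S$ (the length-one subsequence $g$), hence lies in $\Sigma(S) \subseteq \Sigma^0(S)$; these elements are pairwise distinct because the $g$'s are distinct and nonzero.

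The only place that requires any thought is keeping the signs straight in the substitution $w_i = -1 + 2\delta_i$; the dual choice $w_i = 1 - 2\delta_i$ would produce $\sigma(S) - 2\cdot \Sigma^0(S)$, which equals $-\sigma(S) + 2\cdot \Sigma^0(S)$ after replacing $\Sigma^0(S)$ by its image under $T \mapsto \sigma(S) - \sigma(T)$ (which is a bijection of $\Sigma^0(S)$ onto itself, corresponding to taking complements of subsequences), but it is cleaner to choose the substitution so the answer appears directly. No other obstacles are anticipated.
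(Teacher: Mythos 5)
Your proof is correct; the paper itself gives no argument for this lemma, merely recalling it from \cite{MORW}, and your direct verification via the substitution $w_i=-1+2\delta_i$ together with the observation that multiplication by $2$ and translation are bijections when $|G|$ is odd is exactly the standard argument one would expect there.
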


We end this preparatory section by recalling the definition of a further classical constant, the critical number, and a result on it that we need in a proof.  

Let $G$ be a finite abelian group $|G| \ge 3$. The critical number $\cri(G)$ is the smallest positive
integer $\ell$ such that for each squarefree sequence $A  \in \Fc(G )$ that does not contain $0$  with $|A| \geq \ell$ one has $\Sigma(A) = G$. 

The critical number was first studied by  Erd\H{o}s and Heilbronn in 1964 (see \cite{ErdosHeilbronn1964}), meanwhile it is known, by the work of many authors, for all finite abelian groups. 
We refer to  \cite{FreezeGeroldinger2009} for the full result and its history. We only recall the case we need, that is groups of even order (due to  Diderich and Mann \cite{DidMann}, and later simplified by Griggs \cite{griggs}).   

\begin{theorem}\label{cr(G)}
 Let $G$ be a finite abelian group of even order $|G| \neq 2$.
Then $\cri(G)= |G|/2$, unless $G$ is isomorphic to $C_2 \oplus C_2$, $C_4$, $C_6$, $C_2 \oplus C_4$, or $C_8$ where $\cri(G)= 1 + |G|/2$.
\end{theorem}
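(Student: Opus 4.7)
For the lower bound, my plan is a subgroup-based construction: since $|G|$ is even, $G$ has a subgroup $H$ of index~$2$, and the squarefree sequence $A$ supported on $H \setminus \{0\}$ has length $|G|/2 - 1$, avoids $0$, and satisfies $\Sigma(A) \subseteq H \neq G$, giving $\cri(G) \geq |G|/2$.

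For the upper bound, I would fix a squarefree zero-free sequence $A = g_1 \cdots g_\ell$ of length $\ell = |G|/2$ and use the recursion $\Sigma^0(Tg) = \Sigma^0(T) + \{0,g\}$ to view
\[
\Sigma^0(A) = \{0, g_1\} + \{0, g_2\} + \cdots + \{0, g_\ell\}
\]
as a Minkowski sum of two-element sets. If $\Sigma^0(A) \neq G$, let $H$ be the stabilizer of $\Sigma^0(A)$ furnished by Kneser's addition theorem and set $r = |\{i : g_i \notin H\}|$; then Kneser gives $|\Sigma^0(A)| \geq (r+1)|H|$, the $H$-periodicity forces $|\Sigma^0(A)| \leq |G|-|H|$, and the squarefree hypothesis forces $\ell - r \leq |H|-1$ (the $g_i$ inside $H$ are distinct nonzero elements of $H$). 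Combining these three inequalities yields
\[
\tfrac{|G|}{2} + 3 \leq |H| + \tfrac{|G|}{|H|},
\]
and a short divisor check shows no $|H|$ strictly between $1$ and $|G|$ satisfies this once $|G|$ exceeds the five exceptional orders. Hence the stabilizer is either $G$ (so $\Sigma^0(A) = G$ and we are done) or trivial.

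The trivial-stabilizer case will be the main obstacle, since there Kneser yields only $|\Sigma^0(A)| \geq \ell + 1$, far too small. Here the plan is to borrow the $e$-transform of Griggs~\cite{griggs}: pick suitable $i \neq j$ and replace $(g_i, g_j)$ by $(g_i + g_j, g_i - g_j)$ (or a similar admissible substitution), chosen so that the new sequence remains squarefree, zero-free, of length $\ell$, and either strictly enlarges $\Sigma^0(A)$ or acquires a non-trivial stabilizer so that the previous paragraph applies. Iterating to termination closes the argument.

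Finally, each of the five exceptional groups would be handled by a direct example. In $C_8 = \langle e \rangle$, for instance, the length-$4$ squarefree zero-free sequence $e \cdot (-e) \cdot 2e \cdot (-2e)$ has all its subset sums in $\{0, \pm e, \pm 2e, \pm 3e\}$ and therefore misses $4e$; analogous small-case constructions dispose of $C_2 \oplus C_2$, $C_4$, $C_6$, and $C_2 \oplus C_4$.
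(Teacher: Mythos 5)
The paper does not actually prove this statement: it is imported from the literature (Diderich--Mann, later simplified by Griggs), so there is no internal proof to compare against, and you are in effect attempting to reprove from scratch a theorem that occupies full research papers. Within that attempt, your lower bound is fine, and your Kneser-based elimination of nontrivial proper stabilizers is correct --- indeed it works for \emph{every} even $|G|$, not only the non-exceptional ones: from $|\Sigma^{0}(A)|\ge (r+1)|H|$, $|\Sigma^{0}(A)|\le |G|-|H|$ and $\ell-r\le |H|-1$ one gets $|G|/2+3\le |H|+|G|/|H|$, while $|H|+|G|/|H|\le 2+|G|/2$ for every divisor $|H|$ with $2\le |H|\le |G|/2$, a contradiction. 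But this observation cuts against you: it shows that the entire content of the theorem, including the five exceptional groups (your $C_8$ example has $|\Sigma^{0}(A)|=7$, which is aperiodic), is concentrated in the trivial-stabilizer case, and that case is exactly where your argument is a sketch rather than a proof. There Kneser yields only $|\Sigma^{0}(A)|\ge \ell+1$, and the proposed substitution $(g_i,g_j)\mapsto(g_i+g_j,\,g_i-g_j)$ is not the $e$-transform and is not even monotone: $\{0,g_i\}+\{0,g_j\}=\{0,g_i,g_j,g_i+g_j\}$ whereas $\{0,g_i+g_j\}+\{0,g_i-g_j\}=\{0,g_i+g_j,g_i-g_j,2g_i\}$, so $\Sigma^{0}(A)$ can shrink, squarefreeness and zero-freeness can be destroyed, and you give no potential function or termination argument. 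Since any correct treatment of this case must be delicate enough to detect precisely the five exceptions, ``iterate to termination'' cannot be waved through; this is the part to which Diderich--Mann and Griggs devote their papers.

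Two smaller but genuine issues. First, you establish $\Sigma^{0}(A)=G$, but the definition of $\cri(G)$ in this paper requires $\Sigma(A)=G$, i.e.\ $0$ must itself be a nonempty subsum; passing from $G\setminus\{0\}\subseteq\Sigma(A)$ to $0\in\Sigma(A)$ needs a short additional argument. Second, for the five exceptional groups a ``direct example'' only gives the lower bound $\cri(G)\ge 1+|G|/2$; the matching upper bound, that every squarefree zero-free sequence of length $1+|G|/2$ spans, still has to be verified (a finite check for these small groups, but it must be stated and done).
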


\section{A general bound for weighted Harborth constants}
\label{sec_genb}

By the very definition $|G|+1$ is an upper bound for $\g_W(G)$; clearly there is no squarefree sequence over $G$ of size $|G|+1$ and hence the condition is vacuously true. Indeed, there are some groups where this bound is the actual value  of  $\g_W(G)$. Note that this means that for such a $G$, when we restrict to considering squarefree sequences, then no $W$-weighted zero-sum of lengths $\exp(G)$ exists. We determine in full generality for which sets $W$ and groups $G$ this is the case. This also allows to formulate some inverse results, in particular we can solve the inverse problem for cyclic groups and arbitrary sets of weights.  
   
We recall, and use in our proof below, that the answer for the classical version is known. 
Namely, 
\begin{equation}
\label{gg+1}
\g(G)= |G|+1
\end{equation}
 if and only if $G$ is an elementary $2$-group or a cyclic group of even order (see \cite[Lemma 10.1]{gaogersurvey}).

\begin{theorem} \label{thm_gwg+1}
Let $G$ be a finite abelian group and let  $W \subseteq \Z$ be a non-trivial set of weights. Then $\g_{W}(G) = |G| +1$ if and only if  
\begin{itemize}
\item $G$ is an elementary $2$-group,  or 
\item $G$ is a cyclic group of even order $n$  and  $W \subseteq  x +  2^{q} \Z$ where  $x$ is $odd$ and  $2^q||n$.
\end{itemize}
\end{theorem}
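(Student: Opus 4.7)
The plan is to prove both directions of the equivalence by case analysis on the structure of $G$, using the classical result \eqref{gg+1} as a bridge between the weighted and unweighted settings and exploiting that the only squarefree sequence of length $|G|$ over $G$ is $G$ itself, so the question reduces throughout to the existence of a suitable weighted sum indexed by all of $G$.

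Sufficient direction. For $G$ an elementary $2$-group, non-triviality of $W$ forces every $w \in W$ to be odd (since $\exp(G) = 2$), and odd weights act as the identity on elements of order dividing $2$; hence $\sigma_W(S) = \{\sigma(S)\}$ for every $S$, and the statement reduces to $\g(G) = |G|+1$, i.e.\ to \eqref{gg+1}. For $G = C_n = \Z/n\Z$ with $2^q \| n$ and $W \subseteq x + 2^q\Z$ with $x$ odd, I would take an arbitrary weight assignment $f \colon G \to W$ and pass to the projection $\pi \colon G \to \Z/2^q\Z$. Each fibre of $\pi$ has the odd cardinality $m = n/2^q$, and every weight satisfies $f(g) \equiv x \pmod{2^q}$, hence
\[
\pi\Bigl(\sum_{g \in G} f(g)\, g\Bigr) \equiv m x \sum_{a=0}^{2^q - 1} a = m x \cdot 2^{q-1}(2^q - 1) \equiv 2^{q - 1} \pmod{2^q},
\]
using that $m$, $x$, and $2^q-1$ are all odd. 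The residue $2^{q-1}$ is nonzero, so the full sum is never $0$ in $G$, ruling out a length-$|G|$ weighted zero-sum.

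Necessary direction. Assume neither condition of the theorem holds. If $G$ is neither elementary $2$ nor cyclic of even order, \eqref{gg+1} supplies a subset $A \subseteq G$ with $|A| = \exp(G)$ and $\sigma(A) = 0$; assigning any constant weight $w \in W$ gives $\sum_{a \in A} w a = w \sigma(A) = 0$. Otherwise $G = C_n$ is cyclic of even order and $W$ fails the class condition, which splits in two. If some $w \in W$ is even, weighting all of $G$ by $w$ gives $w \sigma(G) = w \cdot n/2 \equiv 0 \pmod n$. If $W$ is entirely odd but contains $w_1, w_2$ with $w_2 - w_1 = 2^r u$, $u$ odd and $1 \le r < q$, I would weight a subset $T \subseteq G$ by $w_2$ and its complement by $w_1$; using that $w_1$ odd implies $w_1 \cdot n/2 \equiv n/2$, the total sum is congruent to $n/2 + 2^r u\, \sigma(T) \pmod n$. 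The required equation $2^r u\, \sigma(T) \equiv -n/2 \pmod n$ reduces, after dividing by $2^r$, to a congruence modulo $n/2^r$ with invertible coefficient $u$, and a solution is realised by taking $T = \{s_0\}$ for the appropriate representative $s_0$.

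The main obstacle is the $2$-adic arithmetic in both halves of the cyclic even case: on the sufficient side, one must count multiplicities of residues mod $2^q$ and verify that the factor $2^{q-1}$ survives multiplication by the odd constant $mx(2^q-1)$; on the necessary side, one must decompose the difference of two odd weights and solve the corresponding congruence modulo $n/2^r$. The reduction of the elementary $2$-group case to \eqref{gg+1}, the constant-even-weight construction, and the passage from an unweighted zero-subsum to a weighted one via a constant weight are comparatively routine.
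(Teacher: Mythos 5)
Your overall strategy coincides with the paper's: reduce to elementary $2$-groups and even cyclic groups via \eqref{gg+1} together with $\g_W(G)\le\g(G)$, dispose of elementary $2$-groups by noting that non-trivial weights are odd and hence act as the identity on elements of order $2$, and settle the even cyclic case by a $2$-adic computation. Your fibre count modulo $2^q$ is a harmless repackaging of the paper's evaluation of $x\sum_{i=0}^{n-1}i$ modulo $2^q$, and the constant-weight and even-weight constructions in the necessary direction are fine.

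The one step that does not hold as written is in the final case (all weights odd, $w_2-w_1=2^r u$ with $u$ odd and $1\le r<q$): you claim that, after dividing by $2^r$, the congruence $u\,\sigma(T)\equiv -n/2^{r+1}\pmod{n/2^r}$ has \emph{invertible} coefficient $u$. That is false in general, because $u$ need not be coprime to the odd part of $n$: take $n=12$ and $W=\{1,7\}$, so $q=2$, $w_2-w_1=6=2\cdot 3$, $r=1$, $u=3$, and $\gcd(u,\,n/2^r)=\gcd(3,6)=3$. The congruence is nevertheless always solvable, since $\gcd(u,\,n/2^r)$ equals $\gcd(u,n')$ for $n'$ the odd part of $n$, and this divides $n'$, which divides $n/2^{r+1}=2^{q-r-1}n'$; more directly, $s_0=n/2^{r+1}$ is an explicit solution, because $2^r u\cdot n/2^{r+1}=u\cdot n/2\equiv n/2\equiv -n/2\pmod{n}$ for odd $u$. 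This is precisely the element the paper singles out (it weights $\tfrac{n}{2^{k+1}}e$ by $w_2$ and all other elements by $w_1$), thereby avoiding any solvability discussion. So the gap is local and easily repaired, but the justification you give for the key congruence would fail on concrete inputs and needs to be replaced by one of the arguments above.
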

\begin{proof}
Since $\g_{W}(G)  \le \g(G) \le |G| + 1$ it follows that if $\g_{W}(G) = |G| + 1$ then also $\g(G) = |G| + 1$. 
Thus, by \eqref{gg+1} the only groups for which $\g_{W}(G) = |G| + 1$ might hold are  elementary $2$-groups and cyclic groups of even order.
For $G$ an elementary $2$-group the $W$-weighted version is just the classical one and thus indeed $\g_{W}(G) = |G| +1$, in this case. 

It remains to study the problem for cyclic groups of even order. 
Suppose $G = \langle e \rangle$ is a cyclic group of order $n = 2^q n'$ with $q$ a positive integer and $n'$ an odd integer. 
We consider the two cases: $W \subseteq  x +  2^{q} \Z$ for some odd $x\in\Z$ and $W \nsubseteq  x + 2^{q} \Z$ for each odd $x\in\Z$.

Suppose  $W \subseteq  x + 2^{q} \Z$ with $x\in\Z$ odd. In this case we need to show that $0$ is not a  $W$-weighted sum of size $n$, that is we need to show that $\sum_{i=0}^{n-1} w_i (ie) \neq 0$ for $w_i \in W$ or equivalently $\sum_{i=0}^{n-1} w_i i \not\equiv 0 \pmod{n}$. 
To this end we consider   $\sum_{i=0}^{n-1} w_i i$  modulo $2^q$. Since for each $i$ we have  $w_i \equiv x \pmod{2^q}$, we get that the sum is congruent to $x \sum_{i=0}^{n-1}i = x (n-1) n/2$. Since both $x$ and $n-1$ are odd, and $2^q  \nmid n/2$, it follows that the expression is non-zero modulo $2^q$ and thus modulo $n$. 

Suppose  $W \nsubseteq  x +  2^{q} \Z $ for all  $x$ odd. In this case we need to show that $0$ is a  $W$-weighted sum of size $n$. If $W$ contains an even element, say $w = 2s \in W$ with $s \in \Z$, then
\[
\sum_{i=0}^{n-1}w(ie)  = w\frac{n(n-1)}{2}e  = s(n-1)(ne) = 0.
\]
Now, suppose that all  $w\in W$  are  odd. 
By our assumption on $W$ there exist $w_1, w_2 \in W$ such that $w_2 - w_1 \notin  2^{q} \Z$.  
Let $w_1-w_2 = 2^kr$ with $r \in \Z$ odd and $k$ a positive integer. Note that by assumption $k\le q-1$ and thus $2^{k+1} \mid n$. 
We consider the $W$-weighted sum 
\[w_2\left(\frac{n}{2^{k+1}}e \right) + \sum_{i=0, i \neq \frac{n}{2^{k+1}}}^{n-1}w_1(ie).\]
It equals 
\[w_2 \frac{n}{2^{k+1}} e  - w_1 \frac{n}{2^{k+1}} e + \sum_{i=0}^{n-1} w_1(ie) = (w_2 - w_1)\frac{n}{2^{k+1}}e+ w_1 \frac{(n-1)n}{2}e.\]
Now, since $(w_2 - w_1)\frac{n}{2^{k+1}} = r \frac{n}{2}$, this is   
\((r+w_1 (n-1))\frac{n}{2}e.\) 
Since $r,w_1$ and $(n-1)$ are all odd, $r+w_1(n-1)$ is even, and the sum is indeed $0$. 
\end{proof}

This result allows one to determine  $\g_W(C_n)$ for every set of weights $W$. 

\begin{corollary}
\label{cor_cyclic_w}
Let $n \in \mathbb{N}$  and let  $W \subseteq \Z$ be a non-trivial set of weights. Then
\[
\g_W(C_n) = \begin{cases} n + 1  &  \text{for   $W \subseteq  x +  2^{q} \Z$ where  $x$ is $odd$ and  $2^q||n$ with $q \geq 1$}
\\ n &  \text{otherwise}  \end{cases}.
\]
\end{corollary}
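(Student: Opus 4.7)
The plan is to derive this corollary as a direct consequence of Theorem \ref{thm_gwg+1}, combined with the elementary observation that $\g_W(C_n)$ can only take the two values $n$ or $n+1$.

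First, I would establish the two-sided bound $n \leq \g_W(C_n) \leq n + 1$. The upper bound is the general one noted just before Theorem \ref{thm_gwg+1}: since there is no squarefree sequence over $C_n$ of length greater than $|C_n| = n$, the defining condition is vacuously satisfied at $\ell = n + 1$. The lower bound is equally immediate: a squarefree sequence of length strictly less than $\exp(C_n) = n$ cannot contain a subsequence of length $n$, so the condition fails at $\ell = n - 1$.

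Next, I would specialize Theorem \ref{thm_gwg+1} to $G = C_n$. That theorem asserts $\g_W(C_n) = n + 1$ precisely when either $C_n$ is an elementary $2$-group (which happens only for $n = 2$) or $n$ is even and $W \subseteq x + 2^q \Z$ for some odd $x$ with $2^q \| n$. I would then verify that these two sub-cases collapse to the single condition appearing in the corollary: for $n = 2$, any non-trivial $W$ consists of odd integers (since non-triviality forbids even weights modulo $2$), so $W \subseteq 1 + 2\Z$ with $x = 1$ and $q = 1$, which is already captured by the cyclic condition. For $n$ odd, no $q \geq 1$ satisfies $2^q \| n$, so the first branch of the corollary fails automatically, and combining with the two-sided bound of step one forces $\g_W(C_n) = n$.

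No serious obstacle is expected: the corollary is essentially a rephrasing of Theorem \ref{thm_gwg+1} in the cyclic setting, together with the trivial observation that $\g_W(C_n)$ takes only two possible values. The only delicate point is the boundary case $n = 2$, where one must check that the elementary $2$-group clause and the cyclic clause of Theorem \ref{thm_gwg+1} give the same answer, which they do thanks to the non-triviality hypothesis on $W$.
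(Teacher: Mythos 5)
Your proposal is correct and follows essentially the same route as the paper, which likewise notes $\g_W(C_n)\ge \exp(C_n)=n$ and then invokes Theorem \ref{thm_gwg+1}. Your extra check that the elementary $2$-group clause for $n=2$ is subsumed by the cyclic clause (via non-triviality of $W$) is a detail the paper leaves implicit, but it is the same argument.
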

\begin{proof}
Since $\g_W(C_n)$ is clearly at least $\exp(C_n)= n$, the result is a direct consequence of Theorem  \ref{thm_gwg+1}. 
\end{proof}

Specializing to the plus-minus weighted problem we recover \cite[Corollary 4.1]{MORW}.
\begin{corollary}
\label{cor_cyclic_pm} Let $n \in \mathbb{N}$. Then
\[
\gw(C_n) = \begin{cases} n + 1  &  \text{for } n \equiv 2 \pmod{4}
\\ n &  \text{otherwise}  \end{cases}.
\]
\end{corollary}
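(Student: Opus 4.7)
The plan is to derive the corollary as a direct specialization of Corollary~\ref{cor_cyclic_w} to the weight set $W = \{+1,-1\}$. The only substantive task is to identify for which $n$ this $W$ falls into the first case of Corollary~\ref{cor_cyclic_w}, namely the case $W \subseteq x + 2^q \Z$ for some odd $x$ with $2^q \| n$ and $q \geq 1$.

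The containment $\{+1,-1\} \subseteq x + 2^q \Z$ forces $+1 \equiv -1 \pmod{2^q}$, that is $2^q \mid 2$. Combined with $q \geq 1$ this gives $q = 1$; and conversely, when $q = 1$ both $+1$ and $-1$ are odd so the containment holds with $x = 1$. Therefore the first case of Corollary~\ref{cor_cyclic_w} applies precisely when $2 \| n$, i.e.\ $n \equiv 2 \pmod 4$, and yields $\gw(C_n) = n+1$. In every other situation, either $n$ is odd (so no admissible $q \geq 1$ exists) or $4 \mid n$ (so $q \geq 2$ but $-1 \not\equiv +1 \pmod{2^q}$), and the second case of Corollary~\ref{cor_cyclic_w} gives $\gw(C_n) = n$.

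There is no real obstacle: the content is entirely contained in Theorem~\ref{thm_gwg+1} and its Corollary~\ref{cor_cyclic_w}, and the specialization reduces to the elementary observation that $+1 \equiv -1 \pmod{2^q}$ holds if and only if $q \leq 1$.
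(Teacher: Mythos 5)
Your proposal is correct and matches the paper's own proof: both deduce the statement from Corollary~\ref{cor_cyclic_w} by observing that $\{+1,-1\}\subseteq x+2^q\Z$ for some odd $x$ is equivalent to $2^q\mid 2$, i.e.\ $q=1$, which is exactly the condition $n\equiv 2\pmod 4$. No gaps.
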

\begin{proof}
By Corollary \ref{cor_cyclic_w} we get that $\gw(C_n) = n+1$ if and only if for $2^q||n$ where $q \geq 1$ we have $\{-1,1\} \subseteq x +2^q \Z$ for some odd $x$. The latter is equivalent to $2^q \mid 1 - (-1)$, that is $q=1$ and so  $n \equiv 2 \pmod{4}$.
\end{proof}

We end this section by pointing out the inverse results that the above mentioned direct results yield as immediate consequences.

\begin{remark}
\label{rem_G+1}
Let $G$ be a finite abelian group and let  $W \subseteq \Z$ be a non-trivial set of weights such that $\g_{W}(G) = |G| +1$. Then,  
the \emph{only} squarefree sequence of length $\g_{W}(G)-1$ is the sequence containing each element of $G$, and this sequence thus cannot have a $W$-weighted subsum of length $\exp(G)$. 
\end{remark}

The above remark covers the case of cyclic groups for which $\g_W(G) = |G|+1$, the case $\g_W(G) = |G|$ is covered by the following remark. 

\begin{remark}
\label{rem_GW+1}
Let $G$ be a cyclic group and let  $W \subseteq \Z$ be a non-trivial set of weights. If $\g_W(G) = |G|$, then each squarefree sequence over $G$ of length $\g_W(G)-1 =|G|-1$ of course cannot have any $W$-weighted subsum of length $\exp(G)=|G|$.     
\end{remark}

\section{The inverse problems for $C_2 \oplus C_{2n}$}
\label{sec_main} 

We solve the inverse problem associated to $\g(C_2 \oplus C_{2n})$ and $\gw(C_2 \oplus C_{2n})$. We recall the direct results from our earlier article \cite{MORW}. 

\begin{theorem}
\label{thm_gwdir}
Let $n\in \mathbb{N}$.
For $n \ge 3$ we have
\[\gw(C_2 \oplus C_{2n})= 2n +2.\]
Moreover, $ \gw(C_2 \oplus C_{4})= \gw(C_2 \oplus C_{2})= 5$.
\end{theorem}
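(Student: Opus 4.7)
The plan proceeds in three stages: an explicit construction for the lower bound, a case analysis for the upper bound, and a separate treatment of $n=1,2$. The structural backbone is the projection $\pi\colon G\to G/2G$. Since $G/2G\cong C_2\oplus C_2$ and $-1\equiv 1\pmod 2$, for every subsequence $T$ of $S$ and every choice of signs $\epsilon_i\in\{\pm 1\}$ one has $\pi\bigl(\sum_i\epsilon_i g_i\bigr)=\sum_i\pi(g_i)$. Consequently, a $\pm$-weighted zero-subsum of length $2n$ exists only if the chosen subsequence has coset-sum zero in $G/2G$; this coset-sum constraint drives everything.

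For the lower bound $\gw(C_2\oplus C_{2n})\ge 2n+2$ (with $n\ge 3$), I would exhibit a squarefree $S$ of length $2n+1$ whose coset distribution $(a,b,c,d)$ across the cosets $2G,\;e_1+2G,\;e_2+2G,\;e_1+e_2+2G$ satisfies $c=0$ and whose total coset-sum $T_0=\sigma(S)+2G$ equals $e_2+2G$. Then every length-$2n$ subsequence has coset-sum $T_0-\pi(g_j)\ne 0$, so no $\pm$-weighted zero-subsum of length $2n$ is possible. Concretely: take $(a,b,c,d)=(1,n,0,n)$ for odd $n$ (one element of $2G$ together with all of $e_1+2G$ and all of $e_1+e_2+2G$) and $(a,b,c,d)=(3,n-1,0,n-1)$ for even $n$ (three elements of $2G$ together with $n-1$ elements of each of $e_1+2G$ and $e_1+e_2+2G$). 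In both parities the lengths add to $2n+1$, squarefreeness holds because each coset has $n$ distinct elements, and the coset-sum is $(b+d)e_1+d\cdot e_2=e_2$ since $b+d$ is even and $d$ is odd.

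For the upper bound, let $S$ be squarefree of length $2n+2$ with coset distribution $(a,b,c,d)$, so $a+b+c+d=2n+2$ with each count at most $n$; in particular at least three cosets are occupied, and one checks that for every value of $T_0$ at least one pair $\{g,h\}$ of distinct terms with $\pi(g)+\pi(h)=T_0$ exists. For any such pair the subsequence $T=(gh)^{-1}S$ has length $2n$ and coset-sum zero, so $\sigma(T)\in 2G\cong C_n$. By Lemma~\ref{lem_weightsumset} the goal reduces to proving $\sigma(T)\in 2\cdot\Sigma^0(T)$ inside the cyclic group $2G$. The case analysis runs over the value of $T_0\in G/2G$ and over the balanced/extremal profiles of $(a,b,c,d)$; in each case the plan is to select the omitted pair $\{g,h\}$ so as to maximize $|\supp(T)\cap 2G|$ and then force the containment $\sigma(T)\in 2\cdot\Sigma^0(T)$ by combining the lower bound on $|\Sigma^0|$ from Lemma~\ref{lem_weightsumset}, the sumset inequality \eqref{eq_fullgroup}, and Theorem~\ref{cr(G)} on the critical number. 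The principal difficulty will be the extremal subcases where one of $b,d$ equals $n$ while $c$ or $a$ is very small: there the available elements outside $2G$ leave little room, and one must argue more delicately, possibly invoking Corollary~\ref{shortsum} on short plus-minus zero-subsums to produce the desired subset of $T$.

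The small cases are handled separately. For $n=1$, $G=C_2\oplus C_2$ is an elementary $2$-group, so Theorem~\ref{thm_gwg+1} directly yields $\gw(G)=|G|+1=5$. For $n=2$, $G=C_2\oplus C_4$: the lower bound $\gw(G)\ge 5$ is witnessed by $S=e_1\cdot e_2\cdot(2e_2)\cdot(3e_2)$, whose total sum in any $\pm$-weighting has first coordinate $\pm e_1\ne 0$; the upper bound $\gw(G)\le 5$ follows from a direct coset-distribution check (a routine verification shows that for every squarefree $S$ of length $5$ in $G$ the coset distribution admits an omission pair $\{g,h\}$ with $\pi(g)+\pi(h)=T_0$ such that the resulting length-$4$ $T$ has $\sigma(T)\in 2\cdot\Sigma^0(T)$) combined with Corollary~\ref{shortsum}(2) applied at $\lfloor\log_2 8\rfloor+2=5$, which supplies a short $\pm$-weighted zero-subsum that can be extended to length $4$ via a remaining pair of terms.
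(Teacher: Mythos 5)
First, note that the paper does not prove this theorem at all: it is recalled verbatim from \cite{MORW}. So there is no in-paper proof to compare against; I judge your proposal on its own merits, using the paper's Lemma \ref{lem_proj_full} as the reference for what the hard step requires. Your lower bound is correct and complete: since $-g\equiv g\pmod{2G}$, every $\pm$-weighted sum of a length-$2n$ subsequence is congruent modulo $2G$ to its ordinary sum, and both of your coset distributions $(1,n,0,n)$ and $(3,n-1,0,n-1)$ give $\sigma(S)+2G=e_2+2G$ with the coset $e_2+2G$ unoccupied, so no length-$2n$ subsequence can even have sum in $2G$. The reduction for the upper bound is also the right frame: an omission pair $\{g,h\}$ with $\pi(g)+\pi(h)=T_0$ always exists (at most one coset is empty when $|S|=2n+2$), and by Lemma \ref{lem_weightsumset} the problem becomes $\sigma(T)\in 2\cdot\Sigma^0(T)$ for $T=(gh)^{-1}S$.

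The genuine gap is that this last containment -- which is the entire substance of the theorem -- is never established; you list the tools and explicitly defer the ``delicate'' subcases. Moreover you locate the difficulty in the wrong place: it is not controlled by the coset distribution $(a,b,c,d)$ of $S$ being extremal, but by the projection of $T$ to the cyclic component $\langle e_2\rangle$ having small support. Concretely, writing $\pi_2$ for that projection: if $|\supp(\pi_2(T))|\ge \cri(C_{2n})+1$ then $\Sigma^0(\pi_2(T))=\langle e_2\rangle$ and one is done; otherwise $\pi_2(T)=A^2$, i.e.\ $T=A(e_1+A)$ with $|A|=n$, and one must treat even $n$ (where $\sigma(A)-\sigma(e_1+A)=-ne_1=0$ gives the zero-sum directly) and odd $n$ (where $|2\cdot\supp(A)|\ge (n+1)/2$ and \eqref{eq_fullgroup} give $2\cdot\Sigma^0(T)=2G$) separately, plus the exceptional critical numbers of $C_6$ and $C_8$ for $n\in\{3,4\}$. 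This is exactly the analysis carried out in the proof of Lemma \ref{lem_proj_full}, whose argument shows that any squarefree $T$ of length $2n$ either has $0\in\sigma_{\pm}(T)$ outright or satisfies $2\cdot\Sigma^0(T)=2G$; without that (or an equivalent) your upper bound is only a plan. The $n=2$ case has the same defect in miniature: extending a length-$2$ weighted zero-sum by ``a remaining pair of terms'' can fail (e.g.\ if the three remaining elements are $0$, $e_1$, $2e_2$, no two of them admit a $\pm$-weighted zero-sum), and the asserted ``routine verification'' is not performed.
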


\begin{theorem}
\label{thm_gdir}
Let $n\in \mathbb{N}$.
We have
\[
\g(C_2 \oplus C_{2n})=\begin{cases} 2n + 3  &  \text{for $n$ odd }   \\ 2 n + 2 &  \text{for $n$ even} \end{cases}.
\]
\end{theorem}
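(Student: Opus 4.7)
The proof splits into lower-bound constructions and an upper-bound argument, with the dichotomy between $n$ odd and $n$ even tracing back to the identity that the sum of the full cosets $2G$ and $e_1 + 2G$ equals $ne_1$, which vanishes precisely when $n$ is even. This simultaneously guides the choice of extremal examples and dictates the main obstruction in the upper bound.

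For the lower bound with $n$ odd, I would exhibit the squarefree sequence $S = (\prod_{g \in 2G} g) \cdot (\prod_{g \in e_1 + 2G} g) \cdot e_2 \cdot (e_1 + e_2)$ of length $2n+2$ and show no length-$2n$ subsequence sums to zero. Writing the coset multiplicities as $(a_0, a_1, a_2, a_3)$, the projection to $G/2G \cong C_2 \oplus C_2$ forces $a_1 + a_3$ and $a_2 + a_3$ both to be even; a short case analysis on $(a_2, a_3) \in \{0,1\}^2$ then rules out a zero-sum, crucially using $n e_1 \neq 0$ for $n$ odd. For $n$ even the analogous two-full-coset construction fails (since $2G \cup (e_1 + 2G)$ itself becomes a length-$2n$ zero-sum), so one instead takes $S' = (\prod_{g \in 2G} g) \cdot e_1 \cdot (\prod_{g \in e_2 + 2G} g)$ of length $2n+1$; the parity conditions force $a_1 = 0$ and then $a_0 = a_2 = n$, and the single remaining candidate sums to $ne_2 \neq 0$.

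For the upper bound, let $S$ be a squarefree sequence of length $2n+3$ (respectively $2n+2$) and write $S = S_0 S_1 S_2 S_3$ according to the four cosets of $2G$, with $\ell_i = |S_i|$ and $\ell_0 \ge \ell_1 \ge \ell_2 \ge \ell_3$. For any admissible coset multiplicities $(a_i)$ summing to $2n$ with both parity constraints satisfied, the existence of a zero-sum of length $2n$ with these multiplicities reduces to reaching a prescribed element of $2G \cong C_n$ as a sum of $a_i$-element subsums of the $S_i$. The plan is a case analysis on the profile $(\ell_0, \ldots, \ell_3)$, invoking the Erd\H{o}s--Ginzburg--Ziv constant $\s(C_n) = 2n-1$, Olson-style results on the cardinality of the set of subsums of prescribed length in cyclic groups, and the critical number (Theorem~\ref{cr(G)}) to ensure the reachable sumsets are large enough to hit the target.

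The main obstacle will be the tight profile $\ell_0 = \ell_1 = n$. For $n$ even this is actually benign, since the sum of all of $2G \cup (e_1 + 2G)$ equals $2ne_2 = 0$, so the choice $(a_0, a_1, a_2, a_3) = (n, n, 0, 0)$ works directly. For $n$ odd the same candidate has nonzero sum $e_1$, and one must exploit the extra slack from the length being $2n+3$ to pass to multiplicities such as $(n-1, n-1, 2, 0)$, using that the set of $(n-1)$-element subsums of an $n$-element subset of $C_n$ fills all of $C_n$ to reach the target element of $2G$. Small values of $n$ for which the critical number behaves atypically will require separate treatment, consistent with the exceptional small cases already noted in Theorem~\ref{thm_gwdir}.
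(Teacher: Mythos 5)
This theorem is not proved in the present paper; it is recalled from \cite{MORW}, so there is no in-paper argument to compare against and I assess your proposal on its own terms. There it has a genuine gap: your lower-bound construction for odd $n$ fails. Consider $S = \bigl(\prod_{g\in 2G} g\bigr)\bigl(\prod_{g \in e_1+2G} g\bigr)\, e_2\, (e_1+e_2)$ and let $T$ be the subsequence obtained by deleting the two elements $0$ and $2e_2$ of $2G$. Then $|T| = 2n$, and since for $n$ odd one has $\sigma(2G) = n(n-1)e_2 = 0$ and $\sigma(e_1+2G) = ne_1 = e_1$, we get $\sigma(T) = (0 - 0 - 2e_2) + e_1 + \bigl(e_2 + (e_1+e_2)\bigr) = 2e_1 = 0$. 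Concretely, for $n=3$ in $C_2\oplus C_6$: $4e_2 + e_1 + (e_1+2e_2) + (e_1+4e_2) + e_2 + (e_1+e_2) = 0$ is a zero-sum of length $6$ inside your length-$8$ sequence. The flaw is in the case $(a_2,a_3)=(1,1)$ of your analysis: the parity constraints then only force $a_1$ odd, the choice $(a_0,a_1)=(n-2,n)$ is admissible, the resulting sum lies in $2G$, and the two omitted elements of $2G$ can be chosen so that it vanishes. The claim that $ne_1\neq 0$ blocks all cases is only valid for $(a_2,a_3)=(0,0)$. This is consistent with Theorem \ref{thm_ginv_odd}: for odd $n\ge 3$ every extremal sequence of length $2n+2$ has exactly $(n+1)/2$ elements in \emph{each} of the four cosets of $2G$ (with strong constraints on which elements), whereas your candidate has coset profile $(n,n,1,1)$ and so cannot be extremal. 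So the half of the statement that actually distinguishes odd from even $n$, namely $\g(C_2\oplus C_{2n})\ge 2n+3$ for odd $n$, is not established.

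Your even-$n$ construction of length $2n+1$ is correct (it is an instance of Theorem \ref{thm_ginv_even} with $S_0=\prod_{g\in\langle e_2\rangle}g$ and $S_1=0$, and one checks $\sigma(S_0S_1)=ne_2\notin\supp(S_1)=\{0\}$), and your guiding heuristic --- that the dichotomy comes from $\sigma(2G)+\sigma(e_1+2G)=ne_1$ --- is sound for the even case and for the upper bounds. The upper-bound part is only a plan, not a proof, but that is a matter of incompleteness rather than error; the fatal problem is the odd lower bound, which would need examples of an entirely different shape, spread evenly over all four cosets of $2G$ as in Theorem \ref{thm_ginv_odd}.
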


For $n=1$, that is for $C_2 \oplus C_2$, from Remarks \ref{rem_G+1} and \ref{rem_GW+1} it follows that both with and without weights the only extremal example is the squarefree sequence containing each element once. Whence we can assume $n \ge 2$.  We begin with the special case $C_2 \oplus C_{4}$ for the weighted problem.  Then, we discuss the general case of the weighted problem. Finally, we turn to the problem without weights, distinguishing cases according to the parity of $n$.  

\begin{theorem}
Let $G=C_2 \oplus C_{4}$. The following statements are equivalent:
\begin{itemize}
\item The squarefree sequence $S \in \Fc(G)$ of length $\gw(G)-1$ does not have a plus-minus weighted subsum of length $\exp(G)$.
\item There exists a basis $(e_1,e_2)$ with $\ord e_1 = 2$ and $\ord e_2 = 4$ such that \(S= S_0(e_1+ S_1)\) where $S_0, S_1 \in \Fc( \langle  e_2 \rangle )$ are squarefree sequences and one of the following holds:
\begin{enumerate}
\item  $\{|S_0|,|S_1|\} = \{1,3\}$.
\item  $S_0=hg_0$ and $S_1= hg_1$ with pairwise distinct $h,g_0,g_1$ and $g_0 + g_1 \in \{e_2,3e_2\}$. 
\end{enumerate}  
\end{itemize}
\end{theorem}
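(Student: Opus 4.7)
The plan is to prove the two directions separately. I fix throughout any basis $(e_1,e_2)$ of $G=C_2\oplus C_4$ with $\ord e_1=2$, $\ord e_2=4$; since $G=\langle e_2\rangle\sqcup(e_1+\langle e_2\rangle)$, any squarefree $S$ of length $4$ admits a canonical decomposition $S=S_0(e_1+S_1)$ with $S_0,S_1\in\Fc(\langle e_2\rangle)$ squarefree and $|S_0|+|S_1|=4$.

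For the ``if'' direction I would directly expand a generic length-$4$ plus-minus sum $\sum_{i=1}^{4}\epsilon_ig_i$ and show it is never zero. In Case (1), say $|S_0|=1$, the $e_1$-coefficient is $(\epsilon_2+\epsilon_3+\epsilon_4)e_1=e_1\neq 0$ because the integer $\epsilon_2+\epsilon_3+\epsilon_4\in\{\pm 1,\pm 3\}$ is odd. In Case (2), with $S=h\cdot g_0\cdot(e_1+h)\cdot(e_1+g_1)$, the expansion is $(\epsilon_3+\epsilon_4)e_1+(\epsilon_1+\epsilon_3)h+\epsilon_2g_0+\epsilon_4g_1$; the $e_1$-part vanishes since $\epsilon_3+\epsilon_4\in\{0,\pm 2\}$ and $2e_1=0$. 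Splitting on $\epsilon_1+\epsilon_3\in\{0,\pm 2\}$: if it equals $0$, then vanishing would force $g_0=\pm g_1$, excluded since $g_0\neq g_1$ and $g_0+g_1\in\{e_2,3e_2\}\not\ni 0$; if it equals $\pm 2$, then $(\epsilon_1+\epsilon_3)h=2h\in 2G=\{0,2e_2\}$, while $\pm g_0\pm g_1$ has the same parity in $C_4$ as $g_0+g_1$, namely odd, so the two contributions cannot cancel.

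For the converse, assume $S$ has no length-$4$ plus-minus zero-subsum. First I would rule out $|S_0|\in\{0,4\}$: if $|S_0|=4$ then $S=0\cdot e_2\cdot 2e_2\cdot 3e_2$ and $0-e_2+2e_2+3e_2=0$ is a forbidden zero-sum; if $|S_0|=0$ then $S$ equals the full coset $e_1+\langle e_2\rangle$ and $(e_1)-(e_1+e_2)+(e_1+2e_2)+(e_1+3e_2)=2e_1+4e_2=0$. If $|S_0|\in\{1,3\}$ we are in Case (1) and done.

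The substantive case is $|S_0|=|S_1|=2$; write $S_0=a_1a_2$ and $S_1=b_1b_2$. As before the $e_1$-component of any length-$4$ plus-minus sum vanishes, so the hypothesis becomes $U\cap V=\emptyset$ in $\langle e_2\rangle\cong C_4$, where $U=\{\pm a_1\pm a_2\}$ and $V=\{\pm b_1\pm b_2\}$ (using $V=-V$). I would dispose of the extremes: $\supp(S_0)=\supp(S_1)$ gives $U=V\neq\emptyset$, while $\supp(S_0)\cap\supp(S_1)=\emptyset$ splits into the three bipartitions of $C_4$ into pairs, each of which a direct check shows yields $U\cap V\neq\emptyset$. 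Hence $|\supp(S_0)\cap\supp(S_1)|=1$, allowing one to write $S_0=hg_0$, $S_1=hg_1$ with $h,g_0,g_1$ pairwise distinct. Matching the elements of $\{\pm h\pm g_0\}$ against those of $\{\pm h\pm g_1\}$ yields $U\cap V\neq\emptyset$ iff $g_0+g_1\in\{0,2h\}$ or $g_1-g_0=2h$; since $2h\in 2G=\{0,2e_2\}$ and $g_0\neq g_1$, the negation of these conditions collapses precisely to $g_0+g_1\in\{e_2,3e_2\}$, which is Case (2). The main obstacle is this last enumeration in $C_4$, but it is purely mechanical thanks to the small size of the group.
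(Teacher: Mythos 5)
Your proof is correct, and it follows the same skeleton as the paper's (decompose $S=S_0(e_1+S_1)$ along the cosets of $\langle e_2\rangle$, split on $|S_0|$, and in the balanced case $|S_0|=|S_1|=2$ split further on $|\supp(S_0)\cap\supp(S_1)|\in\{2,1,0\}$), but the execution of each step is genuinely different. The paper outsources the cases $|S_0|\in\{0,4\}$ and $|\supp(S_0S_1)|=4$ to the already-established value $\gw(C_4)=4$, and handles the converse direction entirely through Lemma \ref{lem_weightsumset}, i.e.\ the identity $\sigma_{\pm}(S)=-\sigma(S)+2\cdot\Sigma^0(S)$, which places $\sigma_{\pm}(S)$ in a single nonzero coset of $\langle e_2\rangle$ or of $2G$; in the excluded subcases of the support-overlap-one situation it simply exhibits $\sigma(S)=0$. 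You instead work everything out by hand: the observation that the $e_1$-component of a length-$4$ signed sum is $(\sum\epsilon_i)e_1$ with $\sum\epsilon_i$ of fixed parity replaces the coset argument, and your reformulation of ``no signed zero-sum'' as $\{\pm a_1\pm a_2\}\cap\{\pm b_1\pm b_2\}=\emptyset$ in $C_4$, followed by an enumeration of bipartitions and of the conditions $g_0+g_1\in\{0,2h\}$, $g_1-g_0=2h$, replaces both the appeal to $\gw(C_4)=4$ and the explicit $\sigma(S)=0$ computations. What the paper's route buys is brevity and coherence with the rest of the article (the same Lemma \ref{lem_weightsumset} drives the general $n\ge 3$ case); what yours buys is a self-contained argument requiring no external input. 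One step you compress is the final ``collapses precisely to $g_0+g_1\in\{e_2,3e_2\}$'': the forward implication is pure parity, but the reverse needs the small observation that if $g_0+g_1=2e_2$ with $g_0\neq g_1$ then $\{g_0,g_1\}=\{0,2e_2\}$, forcing $h$ odd and hence $2h=2e_2=g_0+g_1$; this is exactly the enumeration you flag as mechanical, and it does check out.
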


\begin{proof} Assume that the first statement holds. 
By Theorem \ref{thm_gwdir}  we have that $|S|= 4$ and we have that $0 \notin \sigma_{\pm}(S)$.  
Let $(e_1,e_2)$ be a basis with $\ord e_1 = 2$ and $\ord e_2 = 4$. Clearly, we have  \(S= S_0(e_1+ S_1)\) where $S_0, S_1 \in \Fc( \langle  e_2 \rangle )$ are squarefree sequences, and $|S_0|+|S_1|= 4$. If $\{|S_0|,|S_1|\} = \{1,3\}$, we are done. 

If $|S_0| = 4$, then $\gw(C_4)= 4$ (see Corollary \ref{cor_cyclic_pm}), implies that $0 \notin \sigma_{\pm}(S_0)= \sigma_{\pm}(S)$, a contradiction. And, if $|S_1| = 4$, then as $ \sigma_{\pm}(e_1 +S_1)= e_1|S_1| + \sigma_{\pm}(S_1)=  \sigma_{\pm}(S_1)$, we get a contradiction in the same way. 

Thus, it remains to consider the case $|S_0|= |S_1|= 2$. We consider $|\supp(S_0S_1)|$. If $|\supp(S_0S_1)|=2$, then $S_0=S_1$ and $\sigma(S_0) - \sigma(e_1+ S_1) = e_1|S_1| = 0 $ is an element of  $\sigma_{\pm}(S)$, a contradiction.
If $|\supp(S_0S_1)|=4$, then $\gw(C_4)= 4$ shows $0 \in \sigma_{\pm}(S_0S_1)$, observing that $\sigma_{\pm}(S_0S_1)= \sigma_{\pm}(S_0(e_1+S_1)) = \sigma_{\pm}(S)$, we get a contradiction. Thus, $|\supp(S_0S_1)|=3$, that is $S_0=hg_0$ and $S_1= hg_1$ with pairwise distinct $h,g_0,g_1$. It remains to show that $g_0 + g_1 \in \{e_2,3e_2\}$.     
Assume not, that is assume $g_0 + g_1 \in \{0,2e_2\}$.      

If $g_0+g_1=0$, then $\{g_0, g_1\}=\{e_2,3e_2\}$ and $h \in\{0,2e_2\}$.
Thus $\sigma(S)= 2e_1 + (g_0+g_1)+2h =0$, a contradiction.

If $g_0 + g_1 = 2e_2$, then $\{g_0,g_1\}=\{0,2e_2\}$ and $h \in \{e_2, 3e_2\}$. 
Again, $\sigma(S)= 2e_1 + (g_0+g_1)+2h =0$, contradiction.
This completes the argument for the first part.

Reciprocally, suppose  $S$ is as given in the second part. We have to show that $0 \notin \sigma_{\pm}(S)$. If $|S_0|$ and $|S_1|$ are odd, then 
$\sigma(S) \in e_1 + \langle e_2 \rangle$ and thus by Lemma \ref{lem_weightsumset} and the fact that $2G = \langle e_2 \rangle$ we have $\sigma_{\pm}(S) \subseteq e_1 + \langle e_2 \rangle$, which shows $0  \notin \sigma_{\pm}(S)$. 

If $S$ is of the other form,   then $\sigma(S)= 2e_1 + 2h + g_0 + g_1 \in e_2 + 2G$, by the assumption on $g_0 + g_1$. Then, by  Lemma \ref{lem_weightsumset}  $\sigma_{\pm}(S) \subseteq  e_2 + 2G$   whence  $0 \notin \sigma_{\pm}(S)$.
\end{proof}

\begin{theorem}
\label{thm_gpm_inv} 
Let $n\geq 3$ and let $G=C_2 \oplus C_{2n}$.
The following statements are equivalent:
\begin{itemize}
\item The squarefree sequence $S \in \Fc(G)$ of length $\gw(G)-1$ does not have a plus-minus weighted subsum of length $\exp(G)$.
\item There exists a basis $(e_1,e_2)$ with $\ord e_1 = 2$ and $\ord e_2 = 2n$ such that 
\[S= S_0(e_1+ S_1)(e_2 + S_2)(e_1 + e_2 + S_3)\]
where $S_0, S_1, S_2, S_3 \in \Fc( \langle 2 e_2 \rangle )$ are squarefree sequences with $|S_0|+|S_1|+|S_2|+|S_3|= 2n+1$, and there is a $j \in [0,3]$ such that $|S_j|=0$ and $|S_i|$ is odd for $i \neq j$.  
\end{itemize}
\end{theorem}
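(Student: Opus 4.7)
My plan is to prove the two directions separately, the principal tools being Lemma~\ref{lem_weightsumset} and parity analysis in the quotient $G/2G \cong C_2\oplus C_2$.

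For the sufficiency, let $S$ be of the stated form with $|S_j|=0$ and $|S_i|$ odd for $i\neq j$. Under the projection $\pi\colon G\to G/2G$, one computes
\[\pi(\sigma(S))=(|S_1|+|S_3|)\bar{e}_1+(|S_2|+|S_3|)\bar{e}_2,\]
which by the parity assumption equals exactly the image $\bar{c}_j$ of the empty coset $c_j$. For any length-$2n$ subsequence $T=Sg^{-1}$ we have $\pi(\sigma(T))=\bar{c}_j-\pi(g)$, and this vanishes iff $g\in c_j$; but no element of $S$ lies in $c_j$, so $\sigma(T)\notin 2G$. Since $2\cdot\Sigma^0(T)\subseteq 2G$, Lemma~\ref{lem_weightsumset} yields $\sigma_{\pm}(T)\subseteq -\sigma(T)+2G$, which excludes $0$.

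For the necessity, fix any basis $(e_1,e_2)$ and decompose $S=T_0T_1T_2T_3$ with $T_i$ the subsequence of $S$ in coset $c_i$ (where $c_0=2G$, $c_1=e_1+2G$, $c_2=e_2+2G$, $c_3=(e_1+e_2)+2G$), and set $a_i=|T_i|$. Since $\sum a_i=2n+1$ is odd, the parity pattern of $(a_0,a_1,a_2,a_3)$ is either ``three odd, one even'' or ``three even, one odd''; let $j$ denote the parity outlier. The same computation as in the sufficiency gives $\pi(\sigma(S))=\bar{c}_j$, so by Lemma~\ref{lem_weightsumset} any length-$2n$ subsequence $T=Sg^{-1}$ with $0\in\sigma_{\pm}(T)$ forces $g\in T_j$. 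The crux is to establish $a_j=0$. Supposing for contradiction $a_j\ge 1$, pick $g\in T_j$; then $T=Sg^{-1}$ has all coset sizes of the same parity and $\sigma(T)\in 2G$, so we may write $\sigma(T)=2w$. By Lemma~\ref{lem_weightsumset} once more, $0\in\sigma_{\pm}(T)$ reduces to finding $U\subseteq T$ with $\sigma(U)\in w+G[2]$, where $G[2]$ denotes the $2$-torsion subgroup. Projecting along $\psi\colon G\to G/G[2]\cong 2G\cong C_n$, the problem becomes $\psi(w)\in\Sigma^0(\psi(T))$. Letting $g$ range over $T_j$ and applying the critical-number bound $\cri(2G)=n/2$ from Theorem~\ref{cr(G)} to suitable subsequences of $\psi(T)\in\Fc(C_n)$, I would show such a pair $(g,U)$ always exists, yielding the desired contradiction. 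Once $a_j=0$ is secured, setting $S_i:=T_i-c_i\in\Fc(\langle 2e_2\rangle)$ expresses $S$ in the claimed form.

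The main obstacle is this subset-sum construction: Theorem~\ref{cr(G)} is effective for $n$ large, but the small cases (notably $n\in\{3,4\}$, where the critical number takes its exceptional value $1+|G|/2$) will likely require direct verification, and the argument will probably split by the parity of $n$, by the ``odd vs.\ even outlier'' dichotomy, and by whether $a_j=1$ or $a_j\ge 2$. Corollary~\ref{shortsum} on short plus-minus zero-subsums is a natural supplementary tool for trimming $T$ or producing auxiliary short relations inside $T_j$.
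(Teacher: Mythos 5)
Your overall architecture matches the paper's: reduce via Lemma~\ref{lem_weightsumset} to the condition $\sigma(g^{-1}S)\in 2G$, then do the parity analysis in $G/2G\cong C_2\oplus C_2$. The sufficiency direction is complete and correct (it only needs the trivial inclusion $2\cdot\Sigma^0(T)\subseteq 2G$), and your reformulation of the crux of the necessity direction --- that for $g\in T_j$ one must realize $\psi(w)\in\Sigma^0(\psi(T))$ in $G/G[2]\cong C_n$ --- is also correct. But that crux is exactly the hard technical content of the theorem (the paper isolates it as Lemma~\ref{lem_proj_full}, which proves the stronger statement $2\cdot\Sigma^0(g^{-1}S)=2G$ for every $g\mid S$), and your proposal does not prove it: the sentence ``I would show such a pair $(g,U)$ always exists'' defers the entire difficulty, and the tool you name is not adequate. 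First, Theorem~\ref{cr(G)} only gives the critical number for groups of \emph{even} order, so ``$\cri(2G)=n/2$'' is unavailable (and false in general) when $n$ is odd. Second, even when $n$ is even, $\psi(T)$ is a sequence of length $2n$ over $C_n$ in which elements can occur with multiplicity up to $4$, so $|\supp(\psi(T))\setminus\{0\}|$ is only guaranteed to be about $n/2-1$, strictly below $\cri(C_n)=n/2$; the degenerate configurations with small support are achievable and are precisely where the real work lies.

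The paper avoids both problems by not descending to $C_n$: it works with the projection $\pi_2$ onto $\langle e_2\rangle\cong C_{2n}$ (even order, so $\cri(C_{2n})=n$ for $n\ge 5$ is covered by Theorem~\ref{cr(G)}), observes $2\cdot\Sigma^0(g^{-1}S)=2\cdot\Sigma^0(\pi_2(g^{-1}S))$, and gets $\Sigma^0(\pi_2(g^{-1}S))=\langle e_2\rangle$ whenever $|\supp(\pi_2(g^{-1}S))|\ge\cri(C_{2n})+1$. The remaining case forces $g^{-1}S=A(e_1+A)$ with $A$ squarefree of length $n$; for even $n$ this configuration directly admits a plus--minus weighted zero-sum of length $2n$ (weights $+1$ on $A$, $-1$ on $e_1+A$) --- note this step genuinely uses the extremal hypothesis on $S$, so the ``full sumset'' claim is not a pure sumset statement --- and for odd $n$ it is handled by the doubling bound $|2\cdot\supp(A)|\ge\lceil n/2\rceil$ together with \eqref{eq_fullgroup}, plus separate treatment of $n\in\{3,4\}$. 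Your proposal would need to supply an argument of comparable substance for the small-support/high-multiplicity case before the necessity direction can be considered proved.
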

In the proof, we give a less explicit yet more conceptual characterization of the sequences, too.
We start with a lemma. 

\begin{lemma}
\label{lem_proj_full}
Let $n \geq 3$ and let $G = C_2 \oplus C_{2n}$. 
Let $S\in \Fc(G)$ be a squarefree sequence  with $|S| = 2n+1$ that does not have a plus-minus weighted subsum of length $2n$. 
Then, for each $g \mid S$ we have that  $2 \cdot \Sigma^0(g^{-1}S)= 2  G$.
\end{lemma}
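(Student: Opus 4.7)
The plan is to pass to the image under the doubling map and then to invoke the critical number of the cyclic group $2G\cong C_n$. Set $T=g^{-1}S$, so $|T|=2n$, and let $\pi\colon G\to 2G$ be the group homomorphism $x\mapsto 2x$; its kernel is $K=\{0,e_1,ne_2,e_1+ne_2\}$ of order $4$. By Lemma~\ref{lem_weightsumset} the hypothesis $0\notin\sigma_{\pm}(T)$ is equivalent to $\sigma(T)\notin 2\cdot\Sigma^0(T)$, and since $\pi$ is a homomorphism we have $2\cdot\Sigma^0(T)=\Sigma^0(\pi(T))$. Hence the goal reduces to showing $\Sigma^0(R)=2G$ for $R:=\pi(T)$, a sequence over $2G\cong C_n$ of length $2n$ in which each element has multiplicity at most $|K|=4$; in particular $|\supp(R)|\ge\lceil n/2\rceil$.

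My first step is to dispose of the case $0\notin\supp(R)$. Then $\supp(R)$ is a squarefree subset of $2G\setminus\{0\}$ of size at least $\lceil n/2\rceil$, which (up to a finite list of small $n$ that can be checked by inspection) meets or exceeds $\cri(2G)$ --- using Theorem~\ref{cr(G)} for $n$ even and the classical evaluations of $\cri(C_n)$ for $n$ odd recalled before Theorem~\ref{cr(G)}. Therefore $\Sigma(\supp(R))=2G$, whence $\Sigma^0(R)=2G$. For the residual case $0\in\supp(R)$, I would argue that any failure $\Sigma^0(R)\subsetneq 2G$ forces $\supp(R)$ into a proper subgroup $H$ of $2G$; combined with $|R|=2n\le 4|H|$ and the multiplicity bound $4$, this pins down $|H|=n/2$, so $n$ is even, and $T=\pi^{-1}(H)$, which equals the index-$2$ subgroup $\langle e_1,2e_2\rangle$ of $G$. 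This subgroup contains $K$ as its $2$-torsion, and since the sum of the elements of any finite abelian group equals the sum of its $2$-torsion, $\sigma(T)=\sum_{x\in K}x=2e_1+2ne_2=0$. Then $\sigma(T)=0\in 2\cdot\Sigma^0(T)$ via the empty subsum, contradicting the hypothesis.

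The principal obstacle is the residual case: showing that a failure $\Sigma^0(R)\neq 2G$ really forces $\supp(R)$ into a proper subgroup (rather than, say, into a coset of one) requires a Kneser-type analysis of the sumset $\Sigma^0(R)=\sum_{h\in\supp(R)}\{0,h,\dots,m_h h\}$, exploiting both the total length $|R|=2n$ and the uniform multiplicity bound $m_h\le 4$ to rule out the intermediate configurations in which $\supp(R)$ generates $2G$ but the sumset still stabilises under a nontrivial subgroup.
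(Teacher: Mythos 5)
There is a genuine gap. Your reduction via the doubling map is sound ($2\cdot\Sigma^0(T)=\Sigma^0(\pi(T))$, and Lemma \ref{lem_weightsumset} turns the hypothesis into $\sigma(T)\notin 2\cdot\Sigma^0(T)$), and your first case is essentially the paper's first step: when the support is large and avoids $0$, the critical number finishes it. But the case you call ``residual'' is exactly the hard part of the lemma, and you do not prove it --- you only assert that a failure $\Sigma^0(R)\subsetneq 2G$ ``forces $\supp(R)$ into a proper subgroup'' and then concede in your final paragraph that establishing this would require a Kneser-type analysis you have not carried out. That implication is false as stated without further work: a sequence of length $2n$ over $C_n$ with multiplicity bounded by $4$ can have $\supp(R)$ generating $2G$ while $\Sigma^0(R)$ is merely a union of cosets of a nontrivial stabilizer; ruling this out (and handling $0\in\supp(R)$, where $|\supp(R)\setminus\{0\}|$ can drop below $\cri(C_n)$ for even $n$) is the actual content of the lemma in this range. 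Two smaller problems compound this: the paper only recalls $\cri$ for groups of \emph{even} order, so the odd-$n$ values you invoke are not available here, and the exceptional cases $n\in\{4,6,8\}$, where $\cri(C_n)=n/2+1$ exceeds your support bound $\lceil n/2\rceil$, are deferred to an ``inspection'' that is not feasible as stated (e.g.\ $n=8$ means squarefree sequences of length $17$ in a group of order $32$).

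The structural reason your route stalls is the choice of quotient map. The paper projects onto $\langle e_2\rangle\cong C_{2n}$, whose fibers have size $2$, so each element of the image has multiplicity at most $2$ and small support ($\le n$) forces the rigid shape $g^{-1}S=A(e_1+A)$ with $A$ squarefree. From that rigidity the paper either exhibits an explicit plus-minus weighted zero-sum of length $2n$ (for $n$ even, using $\sigma(e_1+A)=\sigma(A)$) or proves $2\cdot\supp(A)+2\cdot\supp(e_1+A)=2G$ via $|2\cdot\supp(A)|\ge (n+1)/2$ and \eqref{eq_fullgroup} (for $n$ odd). Your doubling map has fibers of size $4$, so multiplicity up to $4$ carries no comparable structural information, and the exploitable rigidity is lost. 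If you want to keep your framework, you would need to supply the full Kneser/stabilizer argument for $\Sigma^0(R)$; otherwise, switching to the projection onto $\langle e_2\rangle$ recovers the paper's tractable case distinction.
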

\begin{proof}  
Let  $G = C_2 \oplus C_{2n} = \langle e_1 \rangle  \oplus \langle e_2 \rangle $ with $\ord e_1 =2$ and $\ord e_2 = 2n$, and let  $\pi_2$ denote the projection on $\langle e_2 \rangle$. We note that $2  G = \langle 2e_2 \rangle$. 
Moreover, since $2h = 2 \pi_2 (h)$ for each $h \in G$, we have $ 2 \cdot\Sigma^0(g^{-1}S) =  2 \cdot\Sigma^0( \pi_2(g^{-1}S))$.
Thus, it suffices to show that $2 \cdot\Sigma^0( \pi_2(g^{-1}S)) = \langle 2e_2 \rangle $.

Let $g\mid S$. If $|\supp(\pi_2(g^{-1}S))| \ge \cri( \langle e_2 \rangle ) + 1$, then $\Sigma (\supp(\pi_2(g^{-1}S))) = \langle e_2 \rangle $ as $|\supp(\pi_2(g^{-1}S)) \setminus  \{ 0 \}| \ge  \cri( \langle e_2 \rangle ) $. If this is the case, since $|\Sigma (\supp(\pi_2(g^{-1}S)))| \le  |\Sigma(\pi_2(g^{-1}S)| \le |\Sigma^0(\pi_2(g^{-1}S)|$, we have $\Sigma^0(\pi_2(g^{-1}S) = \langle e_2 \rangle $.
By Theorem \ref{cr(G)}, we know $\cri(\langle e_2 \rangle)= n$ for $n \ge 5$ and  $\cri(\langle e_2 \rangle)= n+1$ for $n=3$ and $n=4$.   

Thus, we now suppose $|\supp(\pi_2(g^{-1}S)) | \le n$. (We deal with the special cases later.) Since $|\pi_2(g^{-1}S)|=2n$ and the multiplicity of each element is at most $2$, we get that $\pi_2(g^{-1}S)=A^2$ for some squarefree sequence $A \in \Fc(\langle e_2  \rangle )$. More precisely, $g^{-1}S = A(e_1 +A)$. 

We  note that for even $n$, we have $\sigma(e_1 + A) = \sigma(A)$ and thus choosing positive weight for the elements of $A$ and negative weight for those in $e_1 + A$, we get a plus-minus weighted zero-sum of length $2n$, a contradiction. 
We assume $n$ is odd.  
As mentioned in Section \ref{sec_prel}  
\[
|2 \cdot \supp(A)|  \ge \left\lceil \frac{n}{2} \right\rceil =  \frac{n+1}{2}.
\]
Note that $2 \cdot \supp(A) = 2 \cdot \supp(e_1 +A)$.
Thus, $|2 \cdot \supp(A)| +|2 \cdot \supp(e_1 +A)| \geq  n+1$, hence by  \eqref{eq_fullgroup}, $2 \cdot \supp(A) + 2 \cdot \supp(e_1 +A) = \langle 2e_2\rangle$.
Since $ 2 \cdot \supp(A) + 2 \cdot \supp(e_1 +A)  \subseteq 2 \cdot\Sigma^0(\pi_2(g^{-1}S))$, the claim follows.

It remains to consider the case that $n =3$ or $n=4$ and $|\supp(\pi_2(g^{-1}S))|=n+1$. If $0 \notin \supp(\pi_2(g^{-1}S)) $ we can complete the argument as above; thus assume $0 \in \supp(\pi_2(g^{-1}S))$. If $n=3$ and  $ \supp(\pi_2(g^{-1}S))$ contains no element of order $6$, we see directly that $\Sigma (\supp(\pi_2(g^{-1}S))) = \langle e_2 \rangle $. Thus we can assume, for $n=3$ and $n =4$, that $ \supp(\pi_2(g^{-1}S))$ contains an element of order $2n$, and without loss we can assume that it is $e_2$. We show that $|\Sigma (\supp(\pi_2(g^{-1}S))) | > |\supp(\pi_2(g^{-1}S))|$. 
Assume not. Clearly,  $\supp(\pi_2(g^{-1}S)) \subseteq \Sigma (\supp(\pi_2(g^{-1}S)))$, so $\supp(\pi_2(g^{-1}S)) = \Sigma (\supp(\pi_2(g^{-1}S)))$. Let $a \in \supp(\pi_2(g^{-1}S)) \setminus \{e_2\}$. Then, $a+e_2 \in  \Sigma (\supp(\pi_2(g^{-1}S)))$ and hence  $ a+e_2 \in \supp(\pi_2(g^{-1}S))$. Consequently, if $ke_2 \in \supp(\pi_2(g^{-1}S)) $ for some $k \in [2,2n]$, then $(k+1)e_2 \in \supp(\pi_2(g^{-1}S))$. Since $\supp(\pi_2(g^{-1}S)) \setminus \{0,e_2\} \neq \emptyset $ it follows that $(2n-1)e_2 = -e_2 \in \supp(\pi_2(g^{-1}S))$. Since $\supp(\pi_2(g^{-1}S)) \setminus \{0,e_2,-e_2\}\neq \emptyset $ it follows, using the argument for both $e_2$ and $-e_2$,  that $\supp(\pi_2(g^{-1}S)) = \langle e_2 \rangle $. This is a contradiction. Thus, we get $|\Sigma (\supp(\pi_2(g^{-1}S))) | > |\supp(\pi_2(g^{-1}S))|$. Now, we can write $\pi_2(g^{-1}S)= T_1T_2$ with squarefree $T_i$ such that $ \supp (T_1) = \supp(\pi_2(g^{-1}S))$. We have $ \Sigma(\pi_2(g^{-1}S)) \supset \Sigma( T_1) + \Sigma (T_2)  $. As $|\Sigma( T_1)| > |T_1|$ and $|\Sigma (T_2) | \ge |T_2|$, we get $|\Sigma( T_1)| + |\Sigma (T_2) |> |T_1|+ |T_2| = 2n$. Thus, by \eqref{eq_fullgroup}, we get $\Sigma( T_1) + \Sigma (T_2) = \langle e_2 \rangle $.
\end{proof}

\begin{proof}[Proof of Theorem \ref{thm_gpm_inv}]
We start by reformulating the second condition. Recall that for each basis $(e_1, e_2)$ with $\ord e_1 = 2$ and $\ord e_2 = 2n$ we have   $2G = \langle 2e_2 \rangle$ and $G/2G$ is isomorphic to $C_2^2$, more specifically $G/2G = \{2G, e_1 + 2G, e_2 + 2G, e_1 + e_2 + 2G\}$. 
Thus, $ S_0(e_1+ S_1) (e_2 + S_2) (e_1 + e_2 + S_3)$ is  a decomposition of $S$ into subsequences containing elements from one co-set only. 
Consequently, the second condition can be expressed as saying:  $S$ is a squarefree sequence of length $2n+1$ whose support is contained in the union of three (of the four) co-sets modulo  $2G$,  and each of these three co-sets contains an odd number of  elements of $S$.

Let $S\in\Fc(G)$ be squarefree with $|S| = \gw(G) - 1 = 2n + 1$. 
Suppose $S$ has no plus-minus weighted zero-subsum of length $2n$.

This is the case if and only if  for each  $g|S$ we have  $0\notin \sigma_\pm(g^{-1}S)$.
Let $g|S$.  By  Lemma \ref{lem_weightsumset} we have that  $\sigma_\pm(g^{-1}S) = -\sigma(g^{-1}S)+ 2\cdot\Sigma^0(g^{-1}S)$, and by Lemma \ref{lem_proj_full}, we have  $2 \cdot\Sigma^0(g^{-1}S)= 2G$.
Thus $\sigma_\pm(g^{-1}S) = -\sigma(g^{-1}S)+ 2 G$.
Hence, $0 \in \sigma_\pm(g^{-1}S)$ if and only if $\sigma(g^{-1}S) \in 2 G$. 
Thus, for each $g \mid S$ we have $\sigma(g^{-1}S) \notin 2  G$.

Conversely, if for $g \mid S$ we have $\sigma(g^{-1}S) \notin 2 G$ then, again as  $\sigma_\pm(g^{-1}S) = -\sigma(g^{-1}S)+ 2\cdot\Sigma^0(g^{-1}S)$, we have $0 \notin  \sigma_\pm(g^{-1}S)$. Thus, if for each $g \mid S$ we have $\sigma(g^{-1}S) \notin 2 G$, then $S$ has no plus-minus weighted zero-subsum of length $2n$.

Thus, we are reduced to characterizing those sequences $S$ such that for each $g \mid S$ we have $\sigma(g^{-1}S) \notin 2 G$. This is most naturally done by passing to the quotient group $G/2G$. 

Let $\varphi:G\rightarrow G/2 G$ denote  the natural epimorphism, and let $R=\varphi(S)$.   
Then $\sigma(g^{-1}S) \notin 2 G$ if and only if $\sigma(\varphi(g)^{-1}R)  \neq 0_{G/2 G}$.

Now, $R$ is a sequence of length  $2n + 1$ over $G/2G$ and we need to characterize when there is no $h \mid R$ such that $\sigma(h^{-1}R)=0$. We note that this condition can be expressed as $\sigma(R)  \notin \supp (R)$. 

Assume $R$ contains exactly three distinct elements, $h_1,h_2, h_3$, each with odd multiplicity.
Then $\sigma(R)= h_1 + h_2 + h_3$, as the order of each element divides $2$, and indeed $h_1+h_2 + h_3$ is the fourth element of $G/2G$, that is  $\sigma(R)  \notin \supp (R)$.

Assume $\sigma(R) \notin \supp (R)$. We get $\supp(R)\neq G/2G$ and thus $|\supp(R)| \le 3$.  
Since $|R|$ is odd, the number of distinct elements occurring with odd multiplicity is odd, that is it is $1$ or $3$. 
Assume  that $h_1$ is the unique element occurring in $R$ with odd multiplicity. Then, as above,  $\sigma(R)= h_1$. Thus, $\sigma (R) \in \supp(R)$, a contradiction.     

Thus, we have that there is no $h \mid R$ such that $\sigma(h^{-1}R)=0$ if and only if $|\supp(R)|=3$ and each element occurs with odd multiplicity.  This completes the argument.
\end{proof}

Next, we consider the problem without weights, distinguishing between even and odd $n$.

\begin{theorem}
\label{thm_ginv_even} 
Let $n\geq 3$ be even  and let $G=C_2 \oplus C_{2n}$.
The following statements are equivalent:
\begin{itemize}
\item The squarefree sequence $S \in \Fc(G)$ of length $\g(G)-1$ does not have a zero-sum subsequence  of length $\exp(G)$.
\item There exists a basis $(e_1,e_2)$ with $\ord e_1 = 2$ and $\ord e_2 = 2n$ such that 
\[S= S_0(e_1+ S_1)\]
where $S_0, S_1 \in \Fc( \langle  e_2 \rangle )$ are squarefree sequences with $|S_0|+|S_1|= 2n+1$, and 
$\sigma(S_0S_1) \notin \supp (S_j)$ where $j\in \{0,1\}$ is such that $|S_j|$ is odd.
\end{itemize}
\end{theorem}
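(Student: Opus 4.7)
The plan is to exploit the fact that, by Theorem~\ref{thm_gdir}, the hypothesis $n$ even gives $|S| = \g(G) - 1 = 2n+1$, which is exactly $\exp(G) + 1$. Consequently a subsequence of $S$ of length $\exp(G) = 2n$ is obtained by deleting exactly one term, so the assertion that $S$ has no zero-sum subsequence of length $2n$ translates directly to: $\sigma(h^{-1}S) \neq 0$ for every $h \mid S$. I will evaluate this condition by splitting $S$ along the index-$2$ subgroup $\langle e_2 \rangle$ and reading off the stated criterion.

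Fix any basis $(e_1,e_2)$ with $\ord e_1 = 2$ and $\ord e_2 = 2n$. Because $\langle e_2 \rangle$ has index $2$ in $G$, the sequence $S$ admits a canonical decomposition $S = S_0(e_1 + S_1)$ with $S_0, S_1 \in \Fc(\langle e_2 \rangle)$ squarefree, and $|S_0| + |S_1| = 2n+1$. Since this total length is odd, exactly one of $|S_0|, |S_1|$ is odd; call its index $j$. A direct expansion gives $\sigma(S) = \sigma(S_0 S_1) + |S_1|\, e_1$, so for every $h \mid S$, writing $h = \alpha_h e_1 + g_h$ with $\alpha_h \in \{0,1\}$ and $g_h \in \langle e_2 \rangle$,
\[
\sigma(h^{-1}S) = \bigl(\sigma(S_0 S_1) - g_h\bigr) + \bigl(|S_1| - \alpha_h\bigr)\, e_1.
\]
The two summands live in complementary subgroups, so this vanishes if and only if $\alpha_h \equiv |S_1| \pmod 2$ and $g_h = \sigma(S_0 S_1)$.

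If $|S_1|$ is even (that is, $j=0$), the parity condition forces $\alpha_h = 0$, i.e.\ $h \in \supp(S_0)$, and then the second condition becomes $h = \sigma(S_0 S_1)$; such an $h$ exists iff $\sigma(S_0 S_1) \in \supp(S_0)$. Symmetrically, if $|S_1|$ is odd (that is, $j=1$), the only candidate is $h = e_1 + \sigma(S_0 S_1)$, which lies in $S$ iff $\sigma(S_0 S_1) \in \supp(S_1)$. Thus $S$ has no zero-sum subsequence of length $2n$ if and only if $\sigma(S_0 S_1) \notin \supp(S_j)$, which is the stated equivalence. Since this holds for every basis with the prescribed orders, the ``there exists a basis'' is automatic in both implications.

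There is no substantive obstacle: the proof is essentially a single calculation. The delicate point worth highlighting is the parity bookkeeping that links the index $j$ of the ``odd'' coset to the unique way in which the $e_1$-contribution in the deletion-sum can cancel; both directions of the equivalence then fall out of the same formula. The hypothesis that $n$ is even is used only through Theorem~\ref{thm_gdir} to secure $|S| - \exp(G) = 1$, keeping the problem one-dimensional; for odd $n$ two deletions must be considered and a considerably more intricate analysis is needed.
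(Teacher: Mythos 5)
Your proof is correct and follows essentially the same route as the paper: both exploit that $|S|=\exp(G)+1$ reduces the problem to single deletions, decompose $S$ along the index-two subgroup $\langle e_2\rangle$, and use the parity of $|S_1|$ to identify the unique coset from which a deletion could produce a zero sum. Your version is if anything slightly cleaner, since it packages both implications into one explicit formula for $\sigma(h^{-1}S)$ rather than arguing the two directions separately.
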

\begin{proof}
Assume that the first statement holds. Clearly, we can write $S$ as \(S= S_0(e_1+ S_1)\)
where $S_0, S_1 \in \Fc( \langle  e_2 \rangle )$ are squarefree sequences with $|S_0|+|S_1|= 2n+1$. 
Of course exactly one of $|S_0|$ and $|S_1|$ is odd. 
Let  $g =  je_1+ g'$ with $g \mid S_j$ where $j$ is chosen such that $|S_j|$ is odd.
Now, $\sigma (T)=  \sigma(T_0T_1) = \sigma(S_0 S_1)  - g'$. 
Since this is not $0$ by assumption, it follows that  $\sigma(S_0 S_1) \notin  \supp (S_j)$.  

Assume that the second statement holds.
Suppose that $T \mid S$ is a zero-sum subsequence of length $2n$. We can write $T= T_0 (e_1 + T_1)$ with $T_i \mid S_i$. 
Since $\sigma (T)= |T_1|e_1  + \sigma(T_0T_1)$, it follows that $|T_1|$ is even. 
Thus $T^{-1}S= g$ where   $g =  je_1+ g'$ with $g \mid S_j$ where $j$ is  such that $|S_j|$ is odd.
Now, $0 = \sigma (T)=  \sigma(T_0T_1) = \sigma(S_0 S_1)  - g'$, a contradiction to  $\sigma(S_0 S_1) \notin  \supp (S_j)$.
Thus, there is no $T \mid S$  that is  a zero-sum subsequence of length $2n$.
\end{proof}

\begin{theorem} 
\label{thm_ginv_odd}
Let $n\geq 3$ be odd  and let $G=C_2 \oplus C_{2n}$.
The following statements are equivalent:
\begin{itemize}
\item The squarefree sequence $S \in \Fc(G)$ of length $\g(G)-1$ does not have a zero-sum subsequence  of length $\exp(G)$.
\item There exists a basis $(e_1,e_2)$ with $\ord e_1 = 2$ and $\ord e_2 = 2n$ such that 
\[S= h + (  S_0(e_1+ S_1)(ne_2 + S_2)(e_1 + ne_2 + S_3)  ) \]
where $h \in G$, and $S_0, S_1, S_2, S_3 \in \Fc( \langle  2e_2 \rangle )$ are squarefree sequences of length $(n+1)/2$  such that  each $S_i$  contains exactly one of $g$ and $-g$ for $g \in \langle 2e_2 \rangle \setminus \{ 0 \}$,  and $\sigma(S_0S_1S_2S_3)= 0$.
\end{itemize}
\end{theorem}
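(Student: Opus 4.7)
The plan is to reformulate the condition in terms of pairs. Since $|S|=\g(G)-1=2n+2$, any subsequence of length $2n$ is the complement in $S$ of a pair $\{a,b\}$ of distinct elements, and its sum equals $\sigma(S)-a-b$. So the task is to characterize squarefree sequences $S\in\Fc(G)$ of length $2n+2$ such that no two distinct elements of $S$ sum to $\sigma(S)$.

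For the forward implication, assume $S$ has the stated form. A direct calculation using $\sigma(S_0S_1S_2S_3)=0$ together with $2e_1=2ne_2=0$ (so that $\tfrac{n+1}{2}(0+e_1+ne_2+(e_1+ne_2))=0$) gives $\sigma(S)=2h$. Writing $a=h+a'$ and $b=h+b'$, a forbidden pair corresponds to two distinct elements of $T:=S_0(e_1+S_1)(ne_2+S_2)(e_1+ne_2+S_3)$ with $a'+b'=0$. A cross-coset pair is excluded because in $G/2G\cong C_2\oplus C_2$ the sum of two distinct classes is nonzero. Within a single piece $r+S_i$ (with $r\in\{0,e_1,ne_2,e_1+ne_2\}$, so $2r=0$), it would require distinct $x,y\in S_i\subseteq \langle 2e_2\rangle$ with $y=-x$; this is excluded by the symmetric structure of $S_i$, which for each nonzero $g\in\langle 2e_2\rangle$ contains exactly one of $\{g,-g\}$.

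For the reverse implication, the first step is to show $\sigma(S)\in 2G$. Indeed, if $\sigma(S)\notin 2G$, then the involution $b\mapsto\sigma(S)-b$ on $G$ has no fixed point at all (a fixed point $b$ would satisfy $2b=\sigma(S)\in 2G$), and the hypothesis forces $\supp(S)$ and $\sigma(S)-\supp(S)$ to be disjoint subsets of $G$; hence $2|\supp(S)|=4n+4\le |G|=4n$, a contradiction. Choose $h\in G$ with $2h=\sigma(S)$ and set $S':=-h+S$; then $\sigma(S')=0$ and the condition translates to: no two distinct elements of $S'$ sum to $0$. Decomposing $S'$ along the four cosets modulo $2G$ and using the same $G/2G$-argument to discard cross-coset pairs, the constraint becomes local: writing the part of $S'$ in coset $c$ as $r_c+T_c$ with $T_c\subseteq 2G=\langle 2e_2\rangle$, and using $2r_c=0$, we need no distinct $x,y\in T_c$ with $y=-x$. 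Since $2G$ is cyclic of \emph{odd} order $n$, only $0$ equals its own inverse, so $|T_c|\le (n+1)/2$; the global bound $2n+2=|S'|\le 4\cdot\tfrac{n+1}{2}$ forces equality in every coset, which means each $T_c$ consists of $0$ together with exactly one of $\{x,-x\}$ for each nonzero $x\in 2G$. Finally, $\sigma(S')=0$ combined with $\sum_c|T_c|\,r_c=\tfrac{n+1}{2}(0+e_1+ne_2+(e_1+ne_2))=0$ yields $\sigma(T_0T_1T_2T_3)=0$, and reversing the translation gives the claimed decomposition of $S$.

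The main obstacle is the first step of the reverse direction, namely establishing $\sigma(S)\in 2G$ so that a base point $h$ with $2h=\sigma(S)$ exists. Once the problem is normalized to \emph{$\sigma(S')=0$ with no pair of distinct elements summing to zero}, the coset-pigeonhole becomes clean and essentially forces the specified structure; the hypothesis that $n$ is odd enters precisely through the fact that $0$ is the only element of $2G$ equal to its own inverse.
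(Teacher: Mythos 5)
Your proof is correct, and its overall skeleton matches the paper's: reduce the condition to ``no pair of distinct terms of $S$ sums to $\sigma(S)$'' (every length-$2n$ subsequence being the complement of a pair), translate by $h$ with $2h=\sigma(S)$ so that the normalized sequence has sum zero, decompose along the four cosets of $2G$, exclude cross-coset pairs via $G/2G\cong C_2\oplus C_2$, and use that $2G\cong C_n$ with $n$ odd has only $0$ as its own inverse to force $|S_i|\le (n+1)/2$ and hence equality in every coset. The one genuinely different step is how you establish $\sigma(S)\in 2G$, which is needed for the base point $h$ to exist. The paper invokes Proposition \ref{prop_nonzero} (Proposition 5.4 of \cite{MORW}), a technical result imported from the earlier article, to conclude $\sigma(\pi_1(S))=0$. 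You instead observe that if $\sigma(S)\notin 2G$ then the involution $b\mapsto\sigma(S)-b$ is fixed-point free, so the hypothesis makes $\supp(S)$ and $\sigma(S)-\supp(S)$ disjoint, whence $2(2n+2)\le|G|=4n$, a contradiction. This is a short, self-contained replacement for the imported proposition (and it does not even use that $n$ is odd), so it buys independence from \cite{MORW}; the paper's route has the mild advantage of reusing a statement already established for the direct result $\g(C_2\oplus C_{2n})=2n+3$. The remaining details --- the computation $\sigma(S)=2h$ from $\sigma(S_0S_1S_2S_3)=0$ and $\tfrac{n+1}{2}(2e_1+2ne_2)=0$, and the converse verification --- coincide with the paper's argument.
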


To show this result we make use of the technical result established in our earlier investigations on the Harborth constant.
 
\begin{proposition}[\cite{MORW}, Proposition 5.4]
\label{prop_nonzero}
Let $n \in \mathbb{N}$. Let  $\pi: C_2 \oplus C_2 \oplus C_n \to C_2 \oplus C_2$ denote the projection.
Let $S\in \Fc (C_2 \oplus C_2 \oplus C_n)$ be a squarefree sequence of length $2n+2$. If $\sigma ( \pi_1 (S)) \neq 0$, then $S$ has a zero-sum subsequence of length $2n$.
\end{proposition}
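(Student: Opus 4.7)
The plan is to reformulate the conclusion in elementary terms. Writing $|S| = 2n+2$ and $\exp(G) = 2n$ (where $G = C_2 \oplus C_2 \oplus C_n$), a zero-sum subsequence of length $2n$ corresponds exactly to choosing two distinct elements $g, h \in \supp(S)$ to remove whose sum equals $\sigma(S)$, since then the complementary subsequence has length $2n$ and sum $\sigma(S) - (g+h) = 0$. So the task reduces to proving that $\sigma(S) \in \{g + h : g,h \in \supp(S),\ g \neq h\}$.

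To produce such a pair I would use a pigeonhole/intersection argument of the type underlying \eqref{eq_fullgroup}. Let $A = \sigma(S) - \supp(S)$. Since $S$ is squarefree, the translation $x \mapsto \sigma(S) - x$ shows $|A| = |\supp(S)| = 2n+2$. As $|G| = 4n$ we have $|A| + |\supp(S)| = 4n+4 > |G|$, forcing $A \cap \supp(S) \neq \emptyset$ (indeed $|A \cap \supp(S)| \ge 4$). Any $h$ in this intersection gives $g := \sigma(S) - h \in \supp(S)$ with $g + h = \sigma(S)$, and then $(gh)^{-1}S$ is a zero-sum subsequence of length $2n$, provided only that $g \neq h$.

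The sole remaining obstacle is ruling out the degenerate case $g = h$, i.e.\ $2h = \sigma(S)$, and this is precisely where the hypothesis on $\pi$ (which I read as the projection onto the $C_2 \oplus C_2$ summand, written $\pi_1$ in the statement) enters. In $G = C_2 \oplus C_2 \oplus C_n$ every doubled element lies in $\{0\} \oplus \{0\} \oplus 2\, C_n$, so $2G \subseteq \ker \pi$. On the other hand, the assumption gives $\pi(\sigma(S)) = \sigma(\pi(S)) \neq 0$, hence $\sigma(S) \notin \ker \pi$ and a fortiori $\sigma(S) \notin 2G$. Therefore the equation $2h = \sigma(S)$ has no solution at all in $G$, every element of the non-empty intersection $A \cap \supp(S)$ yields a genuine pair $g \neq h$, and the desired zero-sum subsequence of length $2n$ is obtained.

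I do not anticipate any serious difficulty: the argument is just two translates of a set of size $2n+2$ intersecting inside a group of size $4n$, combined with the elementary observation that the hypothesis on $\pi(\sigma(S))$ rules out $\sigma(S) \in 2G$. The only place one has to be a little careful is the identification $\pi_1 = \pi$ and the bookkeeping that $\pi$ is indeed the projection whose kernel contains $2G$, which is immediate from the decomposition $G = C_2 \oplus C_2 \oplus C_n$.
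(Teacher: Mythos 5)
Your argument is correct, and there is nothing in the paper to compare it against: the proposition is imported verbatim from \cite{MORW} (Proposition 5.4) and used as a black box, so this paper contains no proof of it. Your proof is self-contained and elementary. The reduction is right: since $S$ is squarefree of length $2n+2$ in a group of order $4n$, a zero-sum subsequence of length $2n$ exists if and only if there are distinct $g,h\in\supp(S)$ with $g+h=\sigma(S)$. The translate $A=\sigma(S)-\supp(S)$ satisfies $|A|+|\supp(S)|=4n+4>|G|$, so $|A\cap\supp(S)|\ge 4$, producing pairs $(g,h)$ with $g+h=\sigma(S)$; and the degenerate case $g=h$ would force $\sigma(S)=2h\in 2G\subseteq\ker\pi$, which is excluded because $\pi(\sigma(S))=\sigma(\pi(S))\neq 0$. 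All steps check out (including the identification of $\pi_1$ with $\pi$, which is indeed just a notational slip in the statement), so you have a complete proof of the cited result.
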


\begin{proof}[Proof of Theorem \ref{thm_ginv_odd}]
As $n$ is odd, we get that $G \cong C_2 \oplus C_2 \oplus C_n$. Moreover, for $(e_1,e_2)$  a basis with $\ord e_1 = 2$ and $\ord e_2 = 2n$, we have that $(e_1, ne_2,2e_2)$ is a basis, too. We use the notation  $f_1 = e_1$, $f_2 = ne_2$, and $e = 2e_2$, and we denote by $\pi_1$ and $\pi_2$ the projection from $G$ to $\langle f_1,f_2 \rangle$ and to $\langle e \rangle$, respectively.
 
We consider a squarefree sequence $S \in \Fc(G)$ of length $\g(G)-1$ that does not have a zero-sum subsequence  of length $\exp(G)$. Let  $(e_1,e_2)$ be a basis with $\ord e_1 = 2$ and $\ord e_2 = 2n$. By Proposition \ref{prop_nonzero} we know that $\sigma(\pi_1(S)) = 0$. Let $h\in G$ such that $(2n +2)h= \sigma(S)$. Note that such an $h$ exists, since  $\sigma(S) \in \langle e \rangle$ and $2$ is invertible modulo $n$. We see that $\sigma(-h + S)= 0$. 
Note that  $-h + S$ has a zero-sum subsequence of length $2n$ if and only if $S$ has a zero-sum subsequence of length $2n$.

Since  $|-h +S|= |S|   = 2 n + 2$ and $\sigma(-h + S)= 0$, it follows that $-h +S$ has a zero-sum subsequence of length $2n$ if and only if $-h + S$ has a zero-sum subsequence of length $2$. 
We can write $-h +S= S_0(f_1 + S_1) (f_2 + S_2)(f_1 + f_2 + S_3)$ where each $S_i$ is a squarefree sequence over  $\langle e \rangle$.  Since $\sigma(-h + S) = 0$, it follows that $\sigma(S_0S_1S_2S_3)= 0$.

As mentioned above, if $S$ does not have a zero-sum subsequence of length $2n$, then $-h +S$ does not have a zero-sum subsequence of length $2$ and consequently each $S_i$ does not contain a zero-sum subsequence of length $2$.
That is, for $g \in \langle e \rangle  \setminus \{0\}$ we have that $S_i$ contains at most one of $g$ and $-g$. This implies in particular that $|S_i| \le (n+1)/2$. Since $|-h + S|= |S|= 2n +2$, it follows that in fact $|S_i| = (n+1)/2$.  This shows that $S$ is of the claimed form.

Now, suppose $S$ is as given in the second statement. By Theorem \ref{thm_gdir} we know that $\g(G)-1= 2n+2 = |S|$ and we need to show that $S$ has no zero-sum subsequence of length $2n$. 
Suppose $S$ has a zero-sum subsequence $T$ of length $2n$, then $-h+T$ is a zero-sum subsequence of $-h+S$, and the latter is a zero-sum sequence itself. Therefore, we get  $-h+S$ has a zero-sum subsequence of length $2$.   This is only possible if for some $i \in [0,3]$ the sequence $S_i$ has a zero-sum subsequence of length $2$. 
And, this is possible only when $S_i$ contains $g$ and $-g$ for some non-zero $g$ or when it contains $0$ with multiplicity at least $2$. Both properties  contradict our assumptions, which shows that  $S$ has no zero-sum subsequence  of length $2n$ and completes the proof.
\end{proof}

We recall that for $G= C_2\oplus C_{2n}$ with even $n \in \N$ we have, perhaps surprisingly, that $\g(G)= \gw(G)$ (see Theorems \ref{thm_gwdir} and  \ref{thm_gdir}) even though the  condition imposed in the definition of the latter constant is quite more restrictive.  Now, having established the inverse results we actually see the more restrictive nature of the latter condition in the results, too. To further illustrate this we include some explicit examples.  

\begin{example} Let $n\in \N$ be even, and let $G=C_2\oplus C_{2n}=\langle e_1\rangle\oplus \langle e_2\rangle$ where $(e_1,e_2)$ is a basis with $\ord e_1=2$ and $\ord e_2=2n$. The following squarefree sequences in $\Fc(G)$ of length $\g(G)-1= \gw(G)-1 = 2n+1$ do not have zero-sum subsequences of length $\exp(G)$ but do have plus-minus weighted zero-subsums of length $\exp(G)$.
\begin{enumerate}
\item $(\alpha e_2) \prod_{i=0}^{2n-1}(e_1+ i e_2)$ with $\alpha\in[0,2n-1]$.
\item $(e_1+\alpha e_2)\prod_{i=0}^{2n-1}(i e_2)$ with $\alpha\in[0,2n-1]$.
\end{enumerate}
\end{example}

To see the existence of a plus-minus weighted zero-sum of length $\exp(G)$ it  suffices to note that the sequences  have  $n$  elements from the class  $e_1+2 G$ and $2 G$, respectively; since $n$ is even  Theorem  \ref{thm_gpm_inv} allows to conclude.

To see the non-existence of zero-sum subsequences  of length $\exp(G)$ we can use Theorem \ref{thm_ginv_even}: 
since  $\prod_{i=0}^{2n-1}(i e_2) = n(2n+1)e_2 = ne_2$, we have $\sigma((\alpha e_2) \prod_{i=0}^{2n-1}i e_2) = (\alpha + n)e_2 \notin \{ \alpha e_2\}$.

\section{The exact value of $\sw(C_{2}\oplus  C_{2n})$}
\label{sec_EGZ}

In this section we determine $\sw(C_{2}\oplus  C_{2n})$ for $n \in \N$. Namely, we show that it is equal to $2n+ 2 + \lfloor \log_2 n \rfloor$. 
Before we give the result we discuss how this ties up with earlier results and conjectures. In the proof we use our result on $\gw(C_{2}\oplus  C_{2n})$, too. 
We recall two results of  Adhikari, Grynkiewicz, and  Sun \cite{adGS}
\begin{theorem}[\cite{adGS}, Theorem 1.3.]\label{thm_AGS} Let $G = C_{n_1}\oplus  C_{n_2}\oplus \dots \oplus  C_{n_r}$ with  $1 < n_1 \mid n_2 \mid \ldots \mid n_r$. Then
\begin{enumerate}
\item $\sum_{i}^{r}\lfloor \log_2n_i \rfloor + 1 \leq \Dw(G) \leq \lfloor \log_2|G| \rfloor +1$.
\item $\sw(G)\geq  n_r + \Dw(G) -1 \geq \exp(G) + \sum_{i}^{r}\lfloor \log_2n_i \rfloor$.
\end{enumerate}
\end{theorem}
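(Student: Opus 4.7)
The plan is to prove parts (1) and (2) separately, in each case splitting into an upper and a lower bound. Throughout, fix a basis $(e_1,\dots,e_r)$ of $G$ with $\ord e_i = n_i$, and write $k_i = \lfloor\log_2 n_i\rfloor$.

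For the upper bound in (1), I would invoke Corollary \ref{shortsum}(1) directly: any sequence of length at least $\lfloor\log_2|G|\rfloor+1$ already has a non-empty plus-minus weighted zero-subsum, which yields $\Dw(G) \le \lfloor\log_2|G|\rfloor+1$. For the matching lower bound, I would exhibit the standard ``binary'' sequence
\[
S = \prod_{i=1}^{r}\prod_{j=0}^{k_i-1} (2^j e_i),
\]
of length $\sum_i k_i$, and argue it has no non-empty plus-minus weighted zero-subsum. A subsequence corresponds to a choice of subsets $J_i\subseteq[0,k_i-1]$, and a plus-minus weighted sum is $\sum_i\bigl(\sum_{j\in J_i}\varepsilon_{i,j}\,2^j\bigr)e_i$ with $\varepsilon_{i,j}\in\{\pm 1\}$. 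Since the $e_i$ are independent, each coordinate must vanish modulo $n_i$. The crucial observation is that for a non-empty $J_i$, the signed sum $\sum_{j\in J_i}\varepsilon_{i,j}\,2^j$ is non-zero in $\mathbb{Z}$ (the largest power dominates the rest) and has absolute value at most $2^{k_i}-1 < n_i$, so it is non-zero modulo $n_i$. Thus all $J_i$ are empty, so the subsequence is empty; this gives $\Dw(G)\ge \sum_i k_i+1$.

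For part (2), I would prove the first inequality by the classical ``append zeros'' trick. Let $S_0$ be an extremal sequence for $\Dw(G)$, of length $\Dw(G)-1$, with no non-empty plus-minus weighted zero-subsum, and set $S = S_0 \cdot 0^{\,n_r-1}$, a sequence of length $n_r+\Dw(G)-2$. Suppose $T\mid S$ has length $n_r$ and admits a plus-minus weighted sum equal to $0$. Writing $T=T_0\cdot 0^{\,k}$ with $T_0\mid S_0$ and $k\le n_r-1$, the zeros contribute nothing, so $T_0$ itself has a plus-minus weighted sum equal to $0$. By the choice of $S_0$, this forces $T_0$ to be empty, so $T=0^{n_r}$, contradicting that $S$ contains only $n_r-1$ zeros. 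Hence $\sw(G) \ge n_r + \Dw(G)-1$. The second inequality in (2) is then an immediate consequence of the lower bound from part (1).

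There is no deep obstacle here; the result is foundational and the arguments are standard. The main subtlety is the lower bound in (1), where one must carefully check that the ``powers of $2$'' sequence really admits no non-empty plus-minus weighted zero-subsum. Once the coordinate-wise signed-sum argument is in place, the remaining claims reduce to Corollary \ref{shortsum}(1) and a concatenation with zeros.
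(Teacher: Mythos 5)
Your proposal is correct. Note, however, that the paper does not prove this statement at all: it is quoted verbatim from Adhikari--Grynkiewicz--Sun \cite{adGS} (their Theorem~1.3), so there is no in-paper proof to compare against. Your argument is essentially the standard one for this result. The lower bound in (1) via the ``powers of two'' sequence $\prod_{i=1}^{r}\prod_{j=0}^{k_i-1}(2^j e_i)$ is sound: for a non-empty $J_i$ the signed sum $\sum_{j\in J_i}\varepsilon_{i,j}2^j$ has absolute value between $1$ and $2^{k_i}-1<n_i$, so it cannot vanish modulo $n_i$, and independence of the $e_i$ finishes the argument. The append-zeros argument for (2) is also fine (and is exactly the device the paper itself alludes to in Section~\ref{sec_EGZ} when discussing $\sw(G)\ge \eta_{\pm}(G)+\exp(G)-1$); the only point worth making explicit is that an extremal $S_0$ for $\Dw(G)$ cannot contain $0$, so the count of zeros in $S_0\cdot 0^{\,n_r-1}$ really is $n_r-1$ --- you implicitly use this when deriving the contradiction $T=0^{n_r}$. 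One caveat on self-containedness: your upper bound in (1) leans on Corollary~\ref{shortsum}, which the paper derives from Theorem~4.1.3 of the \emph{same} source \cite{adGS}; within this paper that is a legitimate tool, but a genuinely independent proof would use the pigeonhole argument on the $2^{|S|}$ subset sums of $S$ (two coincide when $2^{|S|}>|G|$, and their symmetric difference yields a non-empty plus-minus weighted zero-subsum).
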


In particular this shows that  $\D_{\pm}(C_{2}\oplus  C_{2n}) =  \lfloor \log_2 2n \rfloor + 2 $ and thus  
$\sw(C_{2}\oplus  C_{2n}) = \exp (C_{2} \oplus  C_{2n} ) - 1 + \D_{\pm}(C_{2}\oplus  C_{2n})$; 
note that the plus-minus weighted Davenport constant for this type of group is determined by the result we just recalled. 
Thus, equality holds in the inequality $\sw(G)\geq  n_r + \Dw(G) -1$ for this group. 
It was shown in \cite{adetal} that  $\sw(C_{n})= n - 1 + \Dw(C_n)  = n  +  \lfloor \log_2 n \rfloor$, and thus it is known that for cyclic groups this equality also holds. Indeed, for cyclic groups this equality holds for every set of weights; we refer to \cite{ZengYuan2011} for an even more general result.  

However, it is known by \cite[Theorem 3]{adetal2} that for odd $n \in \N$ one has 
$\sw(C_n^2) = 2n -1$ while as just recalled $\D_{\pm} (C_n^2) \le  \lfloor \log_2 n^2 \rfloor +1$ and thus 
equality does not hold for sufficiently large $n$; we refer to \cite{MOW} for a more detailed investigation of $\D_{\pm} (C_n^2)$.  
For even $n$ the situation is more subtle and we refer to \cite{adGS} for bounds on $\sw(C_n^2)$ in that case.

In another direction we recall the inequality  $\s(G) \ge \eta(G)  + \exp(G)-1$ and the conjecture that equality always holds (see \cite[Conjecture 6.5]{gaogersurvey}).
The inequality  $\sw(G) \ge \eta_{\pm}(G)  + \exp(G)-1$ is also true (this is even true for any set of weights, which can be seen by  adding $\exp(G)-1$ times the $0$ element to a sequences without weighted zero-subsum of length at most $\exp(G)$), but it is known that equality does not always hold. Namely, Moriya \cite{Moriya2014} showed that for $n> 7$ odd it does not hold for $C_n^2$; this is done by using the results recalled above and noting that if $\Dw(G) \le \exp(G)$ then $\Dw(G) = \eta_{\pm}(G)$.  

Our result also can be used to show that in our case  it is true that $\sw(C_{2}\oplus  C_{2n}) = \eta_{\pm}(C_{2}\oplus  C_{2n})  + \exp(G)-1$. The value  of  $\eta_{\pm}(C_{2}\oplus  C_{2n})$ was determined by Moriya \cite{Moriya2014}.

\begin{theorem}[\cite{Moriya2014}, Theorem 3] Let $l, n \in \N$ with $2^ln \geq 4$. Then $\eta_{\pm}(C_{2^l} \oplus C_{2^ln}) = \Dw(C_{2^l} \oplus C_{2^ln}) = \lfloor \log_2 n\rfloor + 2l + 1$.
\end{theorem}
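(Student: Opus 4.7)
The plan is to prove the two equalities separately: first $\Dw(G) = 2l + \lfloor \log_2 n \rfloor + 1$, and then $\eta_{\pm}(G) = \Dw(G)$, for $G = C_{2^l} \oplus C_{2^l n}$.

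For the Davenport constant I would invoke Theorem \ref{thm_AGS}(1) directly. The lower bound evaluates to $\lfloor \log_2 2^l \rfloor + \lfloor \log_2 (2^l n) \rfloor + 1 = l + (l + \lfloor \log_2 n \rfloor) + 1 = 2l + \lfloor \log_2 n \rfloor + 1$, and the upper bound is $\lfloor \log_2 |G| \rfloor + 1 = \lfloor \log_2 (2^{2l} n) \rfloor + 1 = 2l + \lfloor \log_2 n \rfloor + 1$. The two coincide and pin down $\Dw(G)$.

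For $\eta_{\pm}(G) = \Dw(G)$, the inequality $\eta_{\pm}(G) \geq \Dw(G)$ is immediate from the definitions, since any $\pm$-weighted zero-subsum of length at most $\exp(G)$ is \emph{a fortiori} a $\pm$-weighted zero-subsum. For the reverse, I would apply Corollary \ref{shortsum}(1) to an arbitrary sequence $S$ of length $\Dw(G) = \lfloor \log_2 |G| \rfloor + 1$; this already yields a $\pm$-weighted zero-subsum of $S$ of length at most $\Dw(G)$. Whenever $\Dw(G) \leq \exp(G) = 2^l n$ the length bound automatically satisfies the $\eta_{\pm}$-constraint and gives $\eta_{\pm}(G) \leq \Dw(G)$; this is precisely the observation recorded in the paragraph preceding the theorem.

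A direct case check shows that the inequality $2l + \lfloor \log_2 n \rfloor + 1 \leq 2^l n$ holds for every pair $(l, n)$ with $2^l n \geq 4$ except for $(l, n) = (2, 1)$, where $\Dw(C_4 \oplus C_4) = 5 > 4 = \exp(C_4 \oplus C_4)$. This exceptional case $G = C_4 \oplus C_4$ is the main obstacle, since the short-subsum bound from Corollary \ref{shortsum}(1) only delivers length at most $5$ while $\eta_{\pm}$ demands length at most $4$. To handle it I would project a length-$5$ sequence $S$ to $\bar S \in \Fc(G/2G)$ with $G/2G \cong C_2 \oplus C_2$; pigeonhole in a length-$5$ sequence over a $4$-element group forces either a triple repetition modulo $2G$, whence three elements of $S$ lie in a common coset and their pairwise differences exhaust $2G \setminus \{0\}$, or two disjoint pairs of repetitions, whose two length-$2$ difference-subsums lie in $2G$ and combine into a length-$4$ $\pm$-weighted zero-subsum precisely when the two differences agree; a short case analysis on the remaining configurations, using that $|2G| = 4$ and every element of $2G$ has order at most $2$, then yields the required short subsum.
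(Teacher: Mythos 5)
First, note that the paper does not prove this statement at all: it is quoted from Moriya \cite{Moriya2014}, so there is no internal proof to compare against. Judged on its own terms, your overall architecture is correct and coincides with the reduction the paper sketches in the surrounding discussion. Theorem \ref{thm_AGS}(1) does pin down $\Dw(C_{2^l}\oplus C_{2^l n})$, since $\lfloor \log_2 2^l\rfloor+\lfloor\log_2 (2^l n)\rfloor+1$ and $\lfloor\log_2(2^{2l}n)\rfloor+1$ both equal $2l+\lfloor\log_2 n\rfloor+1$; the inequality $\eta_{\pm}(G)\ge \Dw(G)$ is definitional; Corollary \ref{shortsum}(1) gives $\eta_{\pm}(G)\le \Dw(G)$ whenever $\Dw(G)=\lfloor\log_2|G|\rfloor+1\le\exp(G)$; and your case check correctly isolates $(l,n)=(2,1)$, i.e.\ $G=C_4\oplus C_4$, as the only pair with $2^l n\ge 4$ for which this fails.

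The gap is in the treatment of $C_4\oplus C_4$. The pigeonhole dichotomy you state is false: a length-$5$ sequence distributed over the four cosets of $2G$ can have class-distribution $(2,1,1,1)$, which contains neither a triple repetition nor two disjoint pairs. Moreover, even in the configurations you do list, the stated conclusions do not by themselves produce a zero-subsum: for three distinct elements in a common coset the pairwise differences do cover $2G\setminus\{0\}$, but none of them need vanish, and converting this into a weighted zero-subsum of length at most $4$ requires bringing in the remaining two terms of $S$; for two disjoint pairs you only conclude when the two differences happen to agree. The observations that actually close these cases are absent from your sketch. For instance, if $T=g_3g_4g_5$ has its three terms in the three distinct nonzero cosets of $2G$, then (since $g\mapsto 2g$ induces an isomorphism $G/2G\to 2G$ for $C_4\oplus C_4$) one gets $2\cdot\Sigma^0(T)=2G\ni\sigma(T)$, so $0\in\sigma_{\pm}(T)$ by Lemma \ref{lem_weightsumset}; this disposes of $(2,1,1,1)$ and of every $S$ meeting all three nonzero cosets. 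For two disjoint pairs in nonzero cosets with no length-$2$ zero-subsum, $\sigma_{\pm}(g_1g_2)$ and $\sigma_{\pm}(g_3g_4)$ are two $2$-element subsets of the $3$-element set $2G\setminus\{0\}$, hence intersect, and a common value $x$ yields $0=x-x$ as a length-$4$ weighted zero-subsum; a pair lying inside $2G$ itself behaves differently again, since its $\pm$-sum set is a singleton. So the one case your reduction cannot avoid is exactly the one where "a short case analysis then yields the required short subsum" is doing all the work, and as written it does not constitute a proof.
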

We also recall that Moriya \cite[Theorem 6]{Moriya2014} determined $\sw(C_2 \oplus C_4)=7$. 
We now state and prove our result.

\begin{theorem}
Let $G = C_{2} \oplus  C_{2n}$  with  $n \geq 2$. Then $\sw(G) = 2n+  \lfloor \log_2 2n \rfloor +1$.
\end{theorem}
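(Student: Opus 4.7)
The lower bound $\sw(G)\ge 2n+\lfloor\log_2 2n\rfloor+1$ is essentially free from the discussion immediately preceding the theorem: Theorem \ref{thm_AGS}(2) gives $\sw(G)\ge\exp(G)+\Dw(G)-1$, and the identity $\Dw(C_2\oplus C_{2n})=\lfloor\log_2 2n\rfloor+2$ is already recorded there as a consequence of Theorem \ref{thm_AGS}(1) together with Moriya's equality $\eta_\pm=\Dw$ for this type of group.

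For the upper bound, I would take $S\in\Fc(G)$ with $|S|=2n+\lfloor\log_2 2n\rfloor+1$ and split on the size of $\supp(S)$. If $|\supp(S)|\ge 2n+2=\gw(G)$, I pick any squarefree subsequence of length $2n+2$ and apply Theorem \ref{thm_gwdir} to extract a plus-minus weighted zero-subsum of length $2n$. Otherwise $|\supp(S)|\le 2n+1$, so $|S|-|\supp(S)|\ge\lfloor\log_2 2n\rfloor$. Using that any pair $h\cdot h$ is a plus-minus weighted zero-subsum of length $2$ (via weights $+1,-1$), I extract a maximal family of $P$ disjoint such pairs from $S$, leaving a squarefree residue $S'$ of length $|S|-2P$. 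It then suffices to find a plus-minus weighted zero-subsum of $S'$ of some even length $\ell\in\bigl[\max\{0,|S'|-\lfloor\log_2 2n\rfloor-1\},\min\{|S'|,2n\}\bigr]$, since concatenating with $k=(2n-\ell)/2$ of the $P$ pairs yields the desired length-$2n$ zero-sum, the inequality $k\le P$ following from $2P=|S|-|S'|=2n+\lfloor\log_2 2n\rfloor+1-|S'|$.

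When $|S'|\le\lfloor\log_2 2n\rfloor+1$ one takes $\ell=0$, with $k=n\le P$ verified from $2P\ge 2n$. When $|S'|\ge\lfloor\log_2 2n\rfloor+2$, the task reduces to finding a subsequence $W\mid S'$ of length at most $\lfloor\log_2 2n\rfloor+1$ (of the appropriate parity) such that $W^{-1}S'$ is a plus-minus zero-sum; by Lemma \ref{lem_weightsumset} this amounts to $\sigma(S')-\sigma(W)\in 2\cdot\Sigma^0(W^{-1}S')$, which, by the method of proof of Lemma \ref{lem_proj_full}, reduces once $2\cdot\Sigma^0(W^{-1}S')=2G$ has been verified to the congruence $\pi(\sigma(W))=\pi(\sigma(S'))$ in $G/2G\cong C_2\oplus C_2$, easily achievable by choosing $W$ inside $S'$ so that its $\pi$-image has the prescribed sum. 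The main obstacle is the borderline sub-case $|S'|=2n+1=\gw(G)-1$: either $S'$ already has a plus-minus zero-subsum of length $2n$ (the case $k=0$, $\ell=2n$) and we are done, or Theorem \ref{thm_gpm_inv} forces $S'$ to realize the extremal structure (elements lying in three cosets of $2G$ with odd multiplicities). In this last situation I would exploit the explicit coset structure together with Corollary \ref{shortsum}(2) applied inside one coset of $2G$ to trim off a short even-length plus-minus zero-subsum and produce a plus-minus weighted zero-subsum of $S'$ of a suitable smaller even length $\ell\in[2n-\lfloor\log_2 2n\rfloor,2n-2]$.
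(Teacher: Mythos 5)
Your lower bound and the overall shape of the upper bound (split $S$ into a flexible even-length part plus a squarefree residue, then find one plus-minus zero-subsum of the residue of a complementary even length) are in the spirit of the paper's proof, but there is a genuine gap in the middle range of $|S'|$. After extracting $P$ disjoint identical pairs you need a plus-minus zero-subsum of the squarefree residue $S'$ of even length $\ell\ge |S'|-\lfloor\log_2 2n\rfloor-1$, i.e.\ a \emph{long} zero-subsum. Your proposed mechanism for producing it --- choose $W$ with $\pi(\sigma(W))=\pi(\sigma(S'))$ and invoke $2\cdot\Sigma^0(W^{-1}S')=2G$ --- breaks down precisely where it is needed: the equality $2\cdot\Sigma^0(T)=2G$ requires $T$ to be long and well spread (the proof of Lemma \ref{lem_proj_full} uses the critical number and needs length about $2n$), whereas here $W^{-1}S'$ can have length as small as $|S'|-\lfloor\log_2 2n\rfloor-1$. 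Concretely, for $n=8$ take $S'$ of length $9$ containing the eight elements of $\langle e_1,4e_2\rangle\le C_2\oplus C_{16}$; then $2\cdot\Sigma^0(W^{-1}S')\subseteq\langle 8e_2\rangle\neq 2G$ for every admissible $W$, so the reduction to $G/2G$ says nothing. Corollary \ref{shortsum}(2) does not rescue this range either: it only guarantees a short even zero-subsum (length $\le\lfloor\log_2|G|\rfloor+2$), with no control from below, so for $\lfloor\log_2 2n\rfloor+5\le|S'|\le 2n-1$ the length it delivers may be smaller than the required $|S'|-\lfloor\log_2 2n\rfloor-1$. Your treatment of the borderline case $|S'|=2n+1$ is likewise only a sketch, but the middle range is the real failure point.

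The paper closes exactly this gap by not fixing the flexible part once and for all. It takes $M\mid S$ of maximal even length such that $M$ has a plus-minus weighted zero-subsum of \emph{every} even length up to $|M|$ (your collection of pairs is just one candidate for $M$), and shows that any even plus-minus zero-subsum $N$ of $M^{-1}S$ with $|N|\le|M|+2$ could be absorbed into $M$ while preserving the all-even-lengths property, contradicting maximality. Thus the residue $M^{-1}S$ has no short even zero-subsum at all, which via Corollary \ref{shortsum}(2) forces $|M^{-1}S|\le\lfloor\log_2|G|\rfloor+1$ and hence $|M|\ge\exp(G)$; only the single intermediate value $|M^{-1}S|=2n+1$ needs a separate argument (done by projecting onto a cyclic subgroup of order $2n$, not via Theorem \ref{thm_gpm_inv}). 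The key idea you are missing is this iteration/absorption: short even zero-subsums of the residue (such as a pair $g(-g)$, which your pair extraction does not capture) must be folded back into the flexible part, so that one never has to produce a long zero-subsum of a short sequence. As a minor point, your first case uses $\gw(G)=2n+2$, which fails for $n=2$ where $\gw(C_2\oplus C_4)=5$.
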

We recall from above that $ 2n+  \lfloor \log_2 2n \rfloor +1 = \Dw(G)+ \exp(G)   -1= \eta_{\pm}(G)+ \exp(G)   -1$. Moreover, we note that for $n=1$ it is well-known and not hard to see that $\sw(C_2^2)=5$ and $\eta_{\pm}(C_2^2)= 4$. Thus, also in this case  $\sw(G)= \eta_{\pm}(G)+ \exp(G)   -1$. 

\begin{proof}
By the second part of Theorem \ref{thm_AGS} we have that  $\sw(G)\geq  \exp(G)  + \Dw(G) -1$. Thus, by the just recalled facts, we only need to show that $\sw(G) \leq  \exp(G)  + \Dw(G) -1$.  We recall from the first part of Theorem \ref{thm_AGS} that  $\Dw(G) =  \lfloor \log_2|G|\rfloor + 1$. 

Let $S \in \Fc(G)$ with  $|S| = \exp(G) + \Dw(G) -1 = \exp(G) + \lfloor \log_2|G|\rfloor$. 
We need to show that $S$ has a plus-minus weighted zero-subsum of length equal to $\exp(G)$.
Assume for a contradiction that $S$ does not have such a subsum. 

We introduce the following auxiliary sequence. Let $M\mid S$ be a subsequence of maximal length such that $|M|$ is even and $M$ has a plus-minus weighted zero-subsum of length $m$ for each even $m \le |M|$. Note that this definition certainly makes sense as the empty sequence fulfills the condition. If we can show that $|M| \ge \exp(G)$, we have proved our claim. 

We start by showing that $|M|> 0$. We note that by Theorem \ref{thm_gwdir} $|S| \ge \gw(G)= 2n+2$. We infer that $S$ is not squarefree, as otherwise by definition of $\gw(G)$  it would have a plus-minus weighted zero-subsum of length equal to $\exp(G)$ contrary to our assumption.
Thus, there exists some $g \in G$ with $g^2 \mid S$ and  $g^2$ has a plus-minus weighted zero-subsum of length $2$ and $0$, showing that $|M|\ge |g^2|= 2$. 

We assert that $M^{-1}S$ can not have a plus-minus weighted zero-subsum of even length less than or equal to $|M|+2$. Suppose there is a sequence $N \mid M^{-1}S$ that has a plus-minus weighted sum equal to zero with $|N|$ even and $|N|\le |M|+2$. 
Then $MN$ has a plus-minus weighted zero-subsums of length equal to $n$ for each even $n \le |MN|$; for $n \le |M|$ this is clear by the definition of $M$ and for $n \ge |M|+2$ we can combine the plus-minus weighted zero-sum of $N$ with a plus-minus weighted zero-subsum of length $n-|N|$ of $M$ (note that  $|N| \le |M|+2  \le n \le |M| + |N|$). 

Since we now know that  $M^{-1}S$ does not have a plus-minus weighted zero-subsum of length $2$, we get that $M^{-1}S$ is squarefree and consequently $|M^{-1}S| < \gw(G) = 2n +2$ (compare with the argument just above). 

Consequently, $|M| \ge |S| - (2n+1)= \lfloor \log_2|G|\rfloor -1$. 
Next we show that in fact $|M| \ge \lfloor \log_2|G|\rfloor $. 
Assume not. Then $|M^{-1}S| =2n+1$. Let $H$ denote a cyclic subgroup of $G$ of order $2n$, and let $e\in G$ be an element of order $2$ such that $G= \langle e \rangle \oplus H$. 
Let $M^{-1}S= T_0(e+T_e)$ such that $T_0,T_e$ are sequences in $H$; note that both are squarefree. Let $T_x$ denote the longer of the two. We have $|T_x|\ge n+1$. 
Since $|T_x| \ge \lfloor \log_2 |H| \rfloor  + 2 $, as $\lfloor \log_2 (2n)\rfloor  \le  n-1$ for $n \ge 3$, it follows by Corollary \ref{shortsum} that $T_x$ has a plus-minus weighted zero-subsum whose length is even and at most $\lfloor \log_2 |H| \rfloor  + 2$. 
Note that this yields the existence of plus-minus weighted zero-subsum of the same length of $M^{-1}S$; this is obvious if $x=0$ and follows from the fact that the length is even and $e$  is of order $2$ in case $x=e$. Now, $\lfloor \log_2 |H| \rfloor  + 2 = \lfloor \log_2 |G| \rfloor  + 1  = |M|+2$, contradicting the assertion that $M^{-1}S$ does not have a plus-minus weighted zero-subsum whose length is even and at most $|M|+2$. 

Thus, we established that $|M| \ge \lfloor \log_2|G|\rfloor $. To finish the argument we assume $|M| \le \exp(G)-2$. 
Then $|M^{-1}S| \ge \lfloor \log_2|G|\rfloor +2 $. Again by Corollary \ref{shortsum}, we get a plus-minus weighted zero-subsum whose length is even and at most $\lfloor \log_2|G|\rfloor +2 $ and thus at most  $|M|+2$ (note that our bound on $|M|$ is now by $1$ better than the first time we used this type of argument). This contradiction completes the argument. 
\end{proof}

\section*{Acknowledgment}
The authors thank the referee for several very helpful remarks.


\begin{thebibliography}{99}

\bibitem{adetal2} S.D. Adhikari, R. Balasubramanian, F. Pappalardi, P. Rath, Some zero-sum constants with weights, Proc. Indian Acad. Sci. (Math. Sci.) 118 (2008), 183--188.

\bibitem{adetal} S.D. Adhikari, Y. G. Chen, J.B. Friedlander, S.V. Konyagin, F. Pappalardi, Contributions to zero-sum problems, \emph{Discrete Math.} 306 (2006), 1--10.

\bibitem{adGS} S.D. Adhikari, D.J. Grynkiewicz, Z.-W. Sun, On weighted zero-sum sequences, \emph{Adv. in Appl. Math.} 48 (2012), 506--527.

\bibitem{adhi0} S.D. Adhikari, P. Rath, Davenport constant with weights and some related questions, \emph{Integers} 6 (2006), A30, 6 pp.

\bibitem{ErdosHeilbronn1964} P. Erd\H{o}s, H. Heilbronn, On the addition of residue classes modulo p, \emph{Acta Arith.} 9 (1964), 149--159.


\bibitem{DidMann}  G.T. Diderrich, H.B. Mann, Combinatorial problems in finite abelian groups, in: \emph{Survey of Combinatorial Theory}, North-Holland (1973), pp. 95--100.

\bibitem{FreezeGeroldinger2009} M. Freeze, W. Gao, A. Geroldinger, The critical number of finite abelian groups, \emph{J. Number Theory} 129 (2009) 2766--2777;  Corrigendum to ``The critical number of finite abelian groups'' [J. Number Theory 129 (11) (2009) 2766--2777], ibid, 152 (2015), 205--207. 

\bibitem{gaogersurvey} W.D. Gao, A. Geroldinger, Zero-sum problems in finite abelian groups: a survey, \emph{Expo. Math.} 24 (2006), 337--369.

\bibitem{geroldingerhalterkochBOOK} A. Geroldinger, F. Halter-Koch, Non-unique Factorizations: Algebraic, Combinatorial and Analytic Theory, Chapman and Hall/CRC, 2006.  

\bibitem{griggs} J.R. Griggs, Spanning subset sums for finite abelian groups, \emph{Discrete Math.} 229 (2001), 89--99. 

\bibitem{grynk_book} D. M. Grynkiewicz, Structural Additive Theory, Springer, 2013.

\bibitem{Harborth}  H. Harborth, Ein Extremalproblem f\"{u}r Gitterpunkte, \emph{J. Reine Angew. Math.} 262/263 (1973), 356--360.

\bibitem{MORW} L.E. Marchan, O. Ordaz, D. Ramos, W. A. Schmid, Some exact values of te Harborth constant and its plus-minus weighted analogue, \emph{Arch. Math.} 101 (2013), 501--512.

\bibitem{MOW} L.E. Marchan, O. Ordaz, W. A. Schmid,  Remarks on the plus-minus weighted Davenport constant,  \emph{Int. J. Number Theory} 10 (2014), 1219--1239.

\bibitem{Moriya2014} B. K. Moriya, On weighted zero sum subsequences of short length, \emph{Integers} 14 (2014), A21, 1--8.

\bibitem{ZengYuan2011} X. Zeng, P. Yuan,  Weighted Davenport's constant and the weighted EGZ Theorem, \emph{Discrete Math.}, 311 (2011), 1940--1947.

\end{thebibliography}
\end{document}